\documentclass[12pt]{amsart}
\headheight=8pt     \topmargin=0pt \textheight=624pt
\textwidth=432pt \oddsidemargin=18pt \evensidemargin=18pt
\usepackage{mathrsfs}
\usepackage{amssymb}
\usepackage{verbatim}
\usepackage{hyperref}
\usepackage{color}
\usepackage{amsfonts}
\usepackage{mathrsfs}
\usepackage{amsmath}
\usepackage{amssymb}

\begin{document}
\newtheorem{theorem}{Theorem}
\newtheorem{proposition}[theorem]{Proposition}
\newtheorem{conjecture}[theorem]{Conjecture}
\def\theconjecture{\unskip}
\newtheorem{corollary}[theorem]{Corollary}
\newtheorem{lemma}[theorem]{Lemma}
\newtheorem{sublemma}[theorem]{Sublemma}
\newtheorem{observation}[theorem]{Observation}
\theoremstyle{definition}
\newtheorem{definition}{Definition}
\newtheorem{notation}[definition]{Notation}
\newtheorem{remark}[definition]{Remark}
\newtheorem{question}[definition]{Question}
\newtheorem{questions}[definition]{Questions}
\newtheorem{example}[definition]{Example}
\newtheorem{problem}[definition]{Problem}
\newtheorem{exercise}[definition]{Exercise}

\numberwithin{theorem}{section} \numberwithin{definition}{section}
\numberwithin{equation}{section}

\def\earrow{{\mathbf e}}
\def\rarrow{{\mathbf r}}
\def\uarrow{{\mathbf u}}
\def\varrow{{\mathbf V}}
\def\tpar{T_{\rm par}}
\def\apar{A_{\rm par}}

\def\reals{{\mathbb R}}
\def\torus{{\mathbb T}}
\def\heis{{\mathbb H}}
\def\integers{{\mathbb Z}}
\def\naturals{{\mathbb N}}
\def\complex{{\mathbb C}\/}
\def\distance{\operatorname{distance}\,}
\def\support{\operatorname{support}\,}
\def\dist{\operatorname{dist}\,}
\def\Span{\operatorname{span}\,}
\def\degree{\operatorname{degree}\,}
\def\kernel{\operatorname{kernel}\,}
\def\dim{\operatorname{dim}\,}
\def\codim{\operatorname{codim}}
\def\trace{\operatorname{trace\,}}
\def\Span{\operatorname{span}\,}
\def\dimension{\operatorname{dimension}\,}
\def\codimension{\operatorname{codimension}\,}
\def\nullspace{\scriptk}
\def\kernel{\operatorname{Ker}}
\def\ZZ{ {\mathbb Z} }
\def\p{\partial}
\def\rp{{ ^{-1} }}
\def\Re{\operatorname{Re\,} }
\def\Im{\operatorname{Im\,} }
\def\ov{\overline}
\def\eps{\varepsilon}
\def\lt{L^2}
\def\diver{\operatorname{div}}
\def\curl{\operatorname{curl}}
\def\etta{\eta}
\newcommand{\norm}[1]{ \|  #1 \|}
\def\expect{\mathbb E}
\def\bull{$\bullet$\ }

\def\xone{x_1}
\def\xtwo{x_2}
\def\xq{x_2+x_1^2}
\newcommand{\abr}[1]{ \langle  #1 \rangle}

\newcommand{\Norm}[1]{ \left\|  #1 \right\| }
\newcommand{\set}[1]{ \left\{ #1 \right\} }
\def\one{\mathbf 1}
\def\whole{\mathbf V}
\newcommand{\modulo}[2]{[#1]_{#2}}
\def \essinf{\mathop{\rm essinf}}
\def\scriptf{{\mathcal F}}
\def\scriptg{{\mathcal G}}
\def\scriptm{{\mathcal M}}
\def\scriptb{{\mathcal B}}
\def\scriptc{{\mathcal C}}
\def\scriptt{{\mathcal T}}
\def\scripti{{\mathcal I}}
\def\scripte{{\mathcal E}}
\def\scriptv{{\mathcal V}}
\def\scriptw{{\mathcal W}}
\def\scriptu{{\mathcal U}}
\def\scriptS{{\mathcal S}}
\def\scripta{{\mathcal A}}
\def\scriptr{{\mathcal R}}
\def\scripto{{\mathcal O}}
\def\scripth{{\mathcal H}}
\def\scriptd{{\mathcal D}}
\def\scriptl{{\mathcal L}}
\def\scriptn{{\mathcal N}}
\def\scriptp{{\mathcal P}}
\def\scriptk{{\mathcal K}}
\def\frakv{{\mathfrak V}}
\def\C{\mathbb{C}}
\def\R{\mathbb{R}}
\def\Rn{{\mathbb{R}^n}}
\def\Sn{{{S}^{n-1}}}
\def\M{\mathbb{M}}
\def\N{\mathbb{N}}
\def\Q{{\mathbb{Q}}}
\def\Z{\mathbb{Z}}
\def\F{\mathcal{F}}
\def\L{\mathcal{L}}
\def\S{\mathcal{S}}
\def\supp{\operatorname{supp}}
\def\dist{\operatorname{dist}}
\def\essi{\operatornamewithlimits{ess\,inf}}
\def\esss{\operatornamewithlimits{ess\,sup}}
\author{Mingming Cao}
\address{Mingming Cao
\\
School of Mathematical Sciences
\\
Beijing Normal University
\\
Laboratory of Mathematics and Complex Systems
\\
Ministry of Education
\\
Beijing 100875
\\
People's Republic of China
}
\email{m.cao@mail.bnu.edu.cn}

\author{Qingying Xue}
\address{
        Qingying Xue\\
        School of Mathematical Sciences\\
        Beijing Normal University \\
        Laboratory of Mathematics and Complex Systems\\
        Ministry of Education\\
        Beijing 100875\\
        People's Republic of China}
\email{qyxue@bnu.edu.cn}
\author{K\^{o}z\^{o} Yabuta}
\address{K\^{o}z\^{o} Yabuta\\Research center for Mathematical
Science \\Kwansei Gakuin University\\Gakuen 2-1, Sanda 669-1337\\
Japan }
\thanks{The second author was supported partly by NSFC
(No. 11471041), the Fundamental Research Funds for the Central Universities (No. 2012CXQT09) and NCET-13-0065. The third named author was supported partly by
Grant-in-Aid for Scientific Research (C) Nr. 23540228, Japan Society
for the Promotion of Science.\\ \indent Corresponding
author: Qingying Xue\indent Email: qyxue@bnu.edu.cn}

\keywords{Multilinear fractional strong maximal operators; $A_{(\vec{p},q),\mathcal{R}}$ weights; dyadic reverse doubling condition; two-weight inequalities}

\date{August 12, 2014.}
\title[On multilinear fractional strong maximal operator]{\textbf{On multilinear fractional strong maximal operator associated with rectangles and multiple weights}}
\maketitle

\begin{abstract}
In this paper, the multilinear fractional strong maximal operator
$\mathcal{M}_{\mathcal{R},\alpha}$ associated with rectangles and
corresponding multiple weights $A_{(\vec{p},q),\mathcal{R}}$ are introduced.
Under the dyadic reverse doubling condition, a necessary and sufficient
condition for two-weight inequalities is given. As consequences, we first
obtain a necessary and sufficient condition for one-weight inequalities.
Then, we give a new proof for the weighted estimates of multilinear fractional
maximal operator $\mathcal{M}_\alpha$ associated with cubes and multilinear
fractional integral operator $\mathcal{I}_{\alpha}$, which is quite different
and simple from the proof known before.
\end{abstract}


\section{Introduction}
The multilinear Calder\'{o}n-Zygmund theory was originated in the works of
Coifman and Meyer on the Calder\'{o}n-Zygmund commutator \cite{Coifman1},
\cite{Coifman2} in the 70s. Later on, it systematically was studied by
Grafakos and Torres in \cite{GT1}, \cite{GT2}. In recent years, the theory on
multilinear Calder\'{o}n-Zygmund operators and related operators,
such as multilinear singular integral, maximal, strong maximal and fractional
maximal type operators, fractional integrals,
have attracted much attentions as a rapid developing field in harmonic analysis.
\cite{GLPT}, \cite{GT1}, \cite{G}, \cite{Kenig}, \cite{LOPTT} are some important papers on multilinear operators.

In 2009, the following multiple weights class $A_{(\vec{p},q)}$ was first introduced and stydied by Chen and Xue in
\cite{CX}, and also simultaneously defined and studied by Moen in \cite{Moen}.
\begin{definition}(\cite{CX} or \cite{Moen})
Let $1<p_1,\cdots,p_m<\infty$, $\frac{1}{p}=\frac{1}{p_{1}}+\cdots+\frac{1}{p_{m}}$, and $q>0$.
Suppose that $\vec{\omega}=(\omega_1,\cdots,\omega_m)$ and
each $\omega_i$ $(i=1,\cdots,m)$ is a nonnegative function on $\Rn$.
We say that $\vec{\omega} \in A_{(\vec{p},q)}$, if it satisfies
$$\sup_Q \left(\frac{1}{|Q|}\int_Q \nu_{\vec{\omega}}^q dx\right)^{\frac{1}{q}}
\prod_{i=1}^m\left(\frac{1}{|Q|} \int_Q \omega_i^{-p_i'} dx \right)^{\frac{1}{p_i'}}< \infty,$$
where the supremum is taken over all cubes $Q$ with sides parallel to the coordinate axes,
and $\nu_{\vec{w}}=\prod_{i=1}^m \omega_i$. If $p_i=1$, $(\frac1{Q}\int_Q \omega_i^{1-p_i'})^{\frac1{p_i'}}$ is understood as $(\inf_Q \omega_i)^{-1}$.
\end{definition}

Based on a characterization of multiple $A_{(\vec{p},q)}$ weights, \cite{CX} and \cite{Moen} established some weighted estimates for the operators in the following definition.
\begin{definition}(\cite{CX} or \cite{Moen})\label{def 1.2}
Given $\vec{f}=(f_1,\cdots,f_m)$, suppose each $f_i$ $(i=1,\cdots ,m)$ is locally integrable on $\R^n$. Then for any $x\in \R^n$, we define the multilinear fractional type maximal operator $\mathcal{M}_\alpha$ and the multilinear fractional integral operator $\mathcal{I}_{\alpha}$ by
\begin{align}\label{definition of M alpha}
\mathcal{M}_\alpha(\vec{f})(x)=\sup_{Q\ni x} \prod_{i=1}^m \frac{1}{|Q|^{1-\frac{\alpha}{mn}}} \int_Q |f_i(y_i)| \;dy_i,   \quad\quad \text{for} \ \ 0< \alpha<mn\end{align}
and
\begin{equation}
\mathcal{I}_{\alpha}(\vec{f})(x) = \int_{{(\mathbb{R}^n)}^m}\frac{\prod_{i = 1}^{m}f_i(x - y_i)}{{|(y_1, \cdots, y_m)|}^{mn - \alpha}}
\,d\vec{y},\quad\quad \text{for} \ \ 0< \alpha<mn,
\end{equation}
where the supremum in (\ref{definition of M alpha}) is taken over all cubes $Q$ containing $x$ in $\R^n$ with the sides parallel to the axes and $d\vec{y} = dy_1\cdots dy_m$, $|(y_1, \cdots, y_m)| = |y_1| + \cdots + |y_m|$.
\end{definition}

We summarize some results of $\mathcal{M}_\alpha $ and $\mathcal{I}_\alpha $ as follows.

\vspace{0.2cm}
\noindent\textbf{Theorem A} (\cite{Moen})
Let $0<\alpha<mn$, $1< p_1,\cdots,p_m < \infty$, $\frac{1}{p}=\frac{1}{p_{1}}+\cdots+\frac{1}{p_{m}}$ and
$\frac{1}{q}=\frac{1}{p}-\frac{\alpha}{n}$. Then $\vec{\omega}\in A_{(\vec{p},q)} $ if and only if either of the following two inequalities holds,
\begin{equation}
{\big\|\mathcal{M}_\alpha (\vec{f})\big\|}_{L^q({\nu_{\vec{\omega} }}^q)} \leq C
\prod_{i = 1}^m{\big\|f_i\big\|}_{L^{p_i}({\omega_i}^{p_i})};
\end{equation}
\begin{equation}
{\big\|\mathcal{I}_\alpha (\vec{f})\big\|}_{L^q({\nu_{\vec{\omega} }}^q)} \leq C
\prod_{i = 1}^m{\big\|f_i\big\|}_{L^{p_i}({\omega_i}^{p_i})}.
\end{equation}

\vspace{0.2cm}
\noindent\textbf{Theorem B} (\cite{CX} or \cite{Moen})
Let $0<\alpha<mn$, $1\leq p_1,\cdots,p_m < \infty$, $\frac{1}{p}=\frac{1}{p_{1}}+\cdots+\frac{1}{p_{m}}$ and
$\frac{1}{q}=\frac{1}{p}-\frac{\alpha}{n}$. Then for $\vec{\omega}\in A_{(\vec{p},q)}$ there is a constant $C>0$
independent of $\vec{f}$ such that
\begin{equation}
{\big\|\mathcal{M}_\alpha(\vec{f})\big\|}_{L^{q,\infty}({\nu_{\vec{\omega} }}^q)} \leq C
\prod_{i = 1}^m{\big\|f_i\big\|}_{L^{p_i}({\omega_i}^{p_i})};
\end{equation}
\begin{equation}
{\big\|\mathcal{I}_\alpha(\vec{f})\big\|}_{L^{q,\infty}({\nu_{\vec{\omega} }}^q)} \leq C
\prod_{i = 1}^m{\big\|f_i\big\|}_{L^{p_i}({\omega_i}^{p_i})}.
\end{equation}

It is well known that the geometry of rectangles in $\Rn$ is more intricate
than that of cubes in $\Rn$, even when both classes of sets are restricted to
have sides parallel to the axes. This makes the investigation of the strong
maximal function pretty much complex, but of course, quite interesting.
In \rm{1935}, a maximal theorem was given by Jessen, Marcinkiewicz and Zygmund
in \cite{JMZ}. They pointed the strong maximal function is not of weak type
$(1,1)$, which is different from the classical Hardy-Littlewood maximal
operator.
Later, in \rm{1975}, the maximal theorem was again proved by C\'{o}rdoba and
Fefferman applying an alternative geometric method in \cite{CF}. Delicate
properties of rectangles in $\Rn$ were also quantified. The heart of the work
of C\'{o}rdoba and Fefferman is a selection theorem for families of rectangles
in $\Rn$. Their covering lemma is quite useful to study the strong maximal
function, such as \cite{B}, \cite{BK}, \cite{GLPT}, \cite{LS}, \cite{LP}.

Recently, Grafakos, Liu, P\'{e}rez and Torres \cite{GLPT} introduced the multilinear strong maximal function $\mathcal{M}_{\mathcal{R}}$ by setting
$$ \mathcal{M}_{\mathcal{R}}\vec{f}(x)=
\sup_{\substack{R \ni x \\ R\in\mathcal{R}}}\prod_{i=1}^m\frac{1}{|R|} \int_R |f_i(y_i)|dy_i,$$
where $\vec{f}=(f_1,\cdots,f_m)$ is an m-dimensional vector of locally integrable functions and $\mathcal{R}$ denotes the family of all rectangles in $\Rn$ with sides parallel to the axes. Moreover, they gave the definition of the corresponding multiple weights $A_{\vec{p},\mathcal{R}}$ associated with $\mathcal{R}$, where
$\vec{\omega}=(\omega_1,\cdots,\omega_m)\in A_{\vec{p},\mathcal{R}}$ if and only if
$$\sup_{R\in\mathcal{R}} \left(\frac{1}{|R|}\int_R \nu_{\vec{\omega}} dx \right)
\prod_{i=1}^m \left(\frac{1}{|R|} \int_R \omega_i^{1-p_i'} dx \right)^{\frac{p}{p_i'}} < \infty,$$
where $\nu_{\vec{\omega}}=\prod_{i=1}^m \omega_i^{p/{p_i'}}$.
For one-weight case $\vec{\omega}$, the weak and strong type boundedness of the multilinear strong maximal operators
were obtained. For two-weight case $(\vec{\omega},\nu)$, the weak boundedness was established whenever $(\vec{\omega},\nu)$ satisfy a certain power bump variant of the multilinear $A_p$ condition. Moreover, a sharp endpoint distributional unweighted estimate for the multilinear strong maximal operator was given.

\vspace{0.2cm}
\noindent\textbf{Theorem C} (\cite{GLPT})
Let $\vec{p}=(p_1,\cdots,p_m)$ with $1<p_{1},p_2,\cdots,p_m<\infty$, $\frac{1}{p}=\frac{1}{p_{1}}+\cdots+\frac{1}{p_{m}}$
and let $\vec{\omega}$ be an m-tuple of weights. Then
$\vec{\omega}\in A_{\vec{p},\mathcal{R}}$ if and only if one of the following two inequalities holds:
\begin{equation}
{\big\|\mathcal{M}_\mathcal{R}(\vec{f})\big\|}_{L^{p,\infty}(\nu_{\vec{\omega}})} \leq C
\prod_{i = 1}^m{\big\|f_i\big\|}_{L^{p_i}(\omega_i)};
\end{equation}

\begin{equation}
{\big\|\mathcal{M}_\mathcal{R}(\vec{f})\big\|}_{L^p(\nu_{\vec{\omega}})} \leq C
\prod_{i = 1}^m{\big\|f_i\big\|}_{L^{p_i}(\omega_i)}.
\end{equation}

Motivated by the works in \cite{CX}, \cite{GLPT}, \cite{Moen}, \cite{Ko}, we first define the multilinear  fractional strong maximal operators $\mathcal{M}_{\mathcal{R},\alpha}$ and a class of multiple fractional type weights $A_{({\vec{p}},q),\mathcal{R}}$ associated with $\mathcal{R}$, which is the family of all rectangles in $\Rn$ with sides parallel to the coordinate axes. Next we establish some weighted theory for multilinear fractional strong maximal operators. More precisely, not only one-weight inequalities but also two-weight inequalities are obtained. It's worth noting that all arguments for $\mathcal{M}_{\mathcal{R},\alpha}$ are appropriate to  multilinear fractional maximal operator $\mathcal{M}_{\alpha}$ as well. We will employ a different method to show the above strong type inequalities  in Theorem A, compared with the method used by Moen in \cite{Moen}.

The article is organized as follows. Definitions and main results will be listed in Section 2. The proof of the two-weight theorems are given in Section 3. In Section 4, first, a characterization of $A_{({\vec{p}},q),\mathcal{R}}$ can be found. Secondly, the relationship between the weights $A_{p,\mathcal{R}}$, $ A_{p,\mathcal{R}}^d$ and the dyadic reverse doubling condition are given in Proposition \ref{pro 4.2}. Third, the proof of the one-weight theorems are presented. Finally, in Section 5, an alternate simple proof of one-weight estimate of multilinear fractional maximal operator is presented.


\section{Definitions and main results}

\begin{definition}[\textbf{Multilinear fractional strong maximal operator}]
For $ 0< \alpha < mn $, and $\vec{f}=(f_1,\cdots,f_m)\in L_{loc}^1 \times \cdots \times L_{loc}^1 $,
we define the multilinear fractional strong maximal operator $\mathcal{M}_{\mathcal{R},\alpha}$ by
$$ \mathcal{M}_{\mathcal{R},\alpha}\vec{f}(x)=
\sup_{R \ni x}\prod_{i=1}^m\frac{1}{|R|^{1-\frac{\alpha}{mn}}} \int_R |f_i(y_i)|dy_i, \ \ x\in \Rn$$
where the supremum is taken over all rectangles $R$ containing $x$ with sides parallel to the coordinate axes.

Similarly, we can define the dyadic version multilinear fractional strong maximal operator $\mathcal{M}_{\mathcal{R},\alpha}^d$.
\end{definition}

\begin{definition}[\textbf{Class of $A_{(\vec{p},q),\mathcal{R}}$}]
Let$1<p_1,\cdots,p_m<\infty$, $\frac{1}{p}=\frac{1}{p_{1}}+\cdots+\frac{1}{p_{m}}$, and $q>0$.
Suppose that $\vec{\omega}=(\omega_1,\cdots,\omega_m)$ and
each $\omega_i$ $(i=1,\cdots,m)$ is a nonnegative function on $\Rn$.
We say that $\vec{\omega}$ satisfies the $A_{({\vec{p}},q),\mathcal{R}}$ condition or $\vec{\omega} \in A_{(\vec{p},q),\mathcal{R}}$, if it satisfies
$$\sup_R \left(\frac{1}{|R|}\int_R \nu_{\vec{\omega}}^q dx\right)^{\frac{1}{q}}
\prod_{i=1}^m\left(\frac{1}{|R|} \int_R \omega_i^{-p_i'} dx \right)^{\frac{1}{p_i'}}< \infty,$$
where $\nu_{\vec{\omega}}=\prod_{i=1}^m \omega_i$. If $p_i=1$, $(\frac1{R}\int_R \omega_i^{1-p_i'})^{\frac1{p_i'}}$ is understood as $(\inf_R \omega_i)^{-1}$.
We also denote by $A_{(\vec{p},q),\mathcal{R}}^d$ the dyadic analog.
\end{definition}

Throughout the article, we denote by $\mathcal{DR}$ the family of all dyadic rectangles in $\Rn$
with sides parallel to the axes.

\begin{definition}[\textbf{Dyadic reverse doubling condition}]
We say a nonnegative measurable function $\omega$ satisfies the dyadic reverse doubling condition,
or $\omega \in RD^{(d)}$, if $\omega$ is locally integrable on $\Rn$ and there is a constant $d>1$ such that
$$
d \int_I \omega(x)dx \leq \int_J \omega(x)dx
$$
for any $I,J\in \mathcal{DR}$, where $I \subset J$ and $|I|=\frac{1}{2^n}|J|$.
\end{definition}

\noindent{\bf {Remark 2.1.}}
The definition, dyadic reverse doubling condition or reverse doubling condition associated with cubes, can be found in \cite{GGKK}, \cite{KM}, \cite{Tachizawa}. In \cite{GGKK}, it was introduced to investigate the boundedness of the dyadic fractional maximal function. Moreover, in Proposition \ref{pro 4.2} we shall see this condition is very weak.\\

Here we formulate the main results of this paper as follows.

\begin{theorem}[\textbf{Two-weighted  estimates for $\mathcal{M}_{\mathcal{R},\alpha}$}]\label{two-weighted estimates}
Let $0<\alpha<mn$, $\frac{1}{p}=\frac{1}{p_{1}}+\cdots+\frac{1}{p_{m}}$ with $1<p_{1},\cdots,p_m<\infty$,
and $0<p<q<\infty$. Assume that  $\nu$ is an arbitrary weight and for $i=1,\dots,m$,
$\omega_i^{1-p_i'}$ satisfies the dyadic reverse doubling condition. Then $(\vec{\omega},{\nu})$ are weights that satisfy
\begin{equation}\label{weight condition}
\sup_{R\in \mathcal{R}} |R|^{\alpha/n+1/q-1/p}\left(\frac{1}{|R|}\int_R \nu dx \right)^{\frac{1}{q}}
\prod_{i=1}^m\left(\frac{1}{|R|}\int_R \omega_i^{1-p_i'}dx \right)^{\frac{1}{p_i'}}< \infty,
\end{equation}

if and only if either of the following two inequalities holds:
\begin{equation}\label{weak inequality}
{\big\|\mathcal{M}_{\mathcal{R},\alpha}(\vec{f})\big\|}_{L^{q,\infty}({\nu})} \leq C
\prod_{i = 1}^m{\big\|f_i\big\|}_{L^{p_i}(\omega_i)};
\end{equation}

\begin{equation}\label{strong inequality}
{\big\|\mathcal{M}_{\mathcal{R},\alpha}(\vec{f})\big\|}_{L^q({\nu})} \leq C
\prod_{i = 1}^m{\big\|f_i\big\|}_{L^{p_i}(\omega_i)}.
\end{equation}

Similar results hold for $\mathcal{M}_{\mathcal{R},\alpha}^d$ and corresponding dyadic version of two-weight condition
$(\ref{weight condition})$.
\end{theorem}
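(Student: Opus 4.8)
The plan is to establish the cycle of implications: the passage from (\ref{strong inequality}) to (\ref{weak inequality}) is immediate from $\|\cdot\|_{L^{q,\infty}(\nu)}\le\|\cdot\|_{L^q(\nu)}$; each of (\ref{weak inequality}) and (\ref{strong inequality}) implies the weight condition (\ref{weight condition}) by testing; and (\ref{weight condition}) $\Rightarrow$ (\ref{strong inequality}) is the substantial direction. For necessity, fix $R\in\mathcal R$ and test either inequality against $f_i=\omega_i^{1-p_i'}\chi_R$. Writing $\sigma_i=\omega_i^{1-p_i'}$, one checks that $\|f_i\|_{L^{p_i}(\omega_i)}^{p_i}=\int_R\omega_i^{1-p_i'}\,dx=\sigma_i(R)$, while for $x\in R$ the single rectangle $R$ already forces $\mathcal M_{\mathcal R,\alpha}(\vec f)(x)\ge\prod_i|R|^{\alpha/(mn)-1}\sigma_i(R)$, so the left-hand side of either inequality is at least a constant multiple of $\nu(R)^{1/q}\prod_i|R|^{\alpha/(mn)-1}\sigma_i(R)$. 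Dividing by the right-hand side and collecting the powers of $|R|$ (using $1-1/p_i=1/p_i'$ and $\sum_i 1/p_i'=m-1/p$) reproduces exactly (\ref{weight condition}); the degenerate cases $\sigma_i(R)\in\{0,\infty\}$ are disposed of routinely.

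For the main direction I would first reduce to the dyadic operator. Every axis-parallel rectangle is contained in a rectangle of comparable volume drawn from one of finitely many shifted (adjacent) dyadic systems $\mathcal D^{(1)},\dots,\mathcal D^{(L)}$, obtained as products over the coordinates of the adjacent one-dimensional dyadic grids; since $0<\alpha<mn$ this gives the pointwise domination $\mathcal M_{\mathcal R,\alpha}(\vec f)\lesssim\max_{1\le\ell\le L}\mathcal M_{\mathcal R,\alpha}^{d,(\ell)}(\vec f)$. Because (\ref{weight condition}) is a supremum over all of $\mathcal R$, it holds in particular over each $\mathcal D^{(\ell)}$, so it suffices to prove (\ref{strong inequality}) for a single dyadic grid, i.e. for $\mathcal M_{\mathcal R,\alpha}^d$.

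The core of the dyadic estimate is a pointwise bound converting the weight condition into a statement about $\nu$-measure. Normalizing $\int f_i^{p_i}\omega_i=1$ and setting $F_i(R)=\int_R f_i^{p_i}\omega_i$, Hölder's inequality with dual weight $\sigma_i$ gives $\int_R f_i\le F_i(R)^{1/p_i}\sigma_i(R)^{1/p_i'}$, whence the rectangle average $A_R:=\prod_i|R|^{\alpha/(mn)-1}\int_R f_i$ satisfies $A_R\le C|R|^{\alpha/n-m}\prod_iF_i(R)^{1/p_i}\sigma_i(R)^{1/p_i'}$. Inserting the rearrangement $\prod_i\sigma_i(R)^{1/p_i'}\le A_0|R|^{m-\alpha/n}\nu(R)^{-1/q}$ of (\ref{weight condition}) collapses this to
\[
A_R\ \le\ C\,\nu(R)^{-1/q}\prod_{i=1}^m F_i(R)^{1/p_i}.
\]
Linearizing $\mathcal M_{\mathcal R,\alpha}^d$ and decomposing by the level sets $\Omega_k=\{\mathcal M_{\mathcal R,\alpha}^d\vec f>a^k\}$ reduces (\ref{strong inequality}) to controlling $\sum_k a^{kq}\nu(\Omega_k)$. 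For a family of pairwise disjoint dyadic rectangles the multilinear Hölder (packing) inequality $\sum_j\prod_iF_i(R_j)^{p/p_i}\le\prod_i(\sum_jF_i(R_j))^{p/p_i}\le1$, together with $q/p>1$, controls $\sum_j\big(\prod_iF_i(R_j)^{1/p_i}\big)^{q}$; so the displayed bound would close the estimate if the selected rectangles were disjoint.

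The main obstacle, and the exact place where the hypothesis enters, is that maximal dyadic rectangles overlap: this is the classical pathology of the strong maximal function, responsible for its failure of weak type $(1,1)$ (cf. \cite{JMZ}), and it destroys the disjointness used above both within a single level and across levels. This is precisely what the dyadic reverse doubling condition on the $\sigma_i$ repairs: along any chain of nested dyadic rectangles the $\sigma_i$-masses contract by at least the factor $d>1$ at each full dyadic refinement, and through the displayed pointwise bound this forces the relevant $\nu$-masses to decay geometrically. I expect the technical heart to be organizing the selected rectangles into a Carleson-type (corona) structure and summing the resulting geometric series, yielding $\sum_k a^{kq}\nu(\Omega_k)\lesssim\prod_i\|f_i\|_{L^{p_i}(\omega_i)}^{q}$ in spite of the overlaps. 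Once the dyadic strong inequality is established, the reduction of the second paragraph returns the full inequality (\ref{strong inequality}) for $\mathcal M_{\mathcal R,\alpha}$, and the identical scheme applies verbatim to $\mathcal M_{\mathcal R,\alpha}^d$.
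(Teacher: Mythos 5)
Your necessity argument and your pointwise consequence $A_R \le C\,\nu(R)^{-1/q}\prod_i F_i(R)^{1/p_i}$ of (\ref{weight condition}) are both correct and match the paper (the paper tests with the same $f_i=\omega_i^{1-p_i'}\chi_R$ and performs the same H\"older computation inside its main estimate). But the heart of the sufficiency direction is missing, and you in effect say so yourself: ``I expect the technical heart to be organizing the selected rectangles into a Carleson-type (corona) structure.'' That organizational step is exactly the difficulty of the theorem, and the heuristic you offer for it does not hold up: dyadic reverse doubling gives geometric decay of the $\sigma_i$-masses only along a single chain of nested dyadic rectangles, whereas the level sets $\Omega_k$ of the strong maximal operator are unions of maximal rectangles overlapping in uncontrolled ways, not organized into chains, and no C\'ordoba--Fefferman covering lemma is available under the stated hypotheses. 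The paper closes this gap by a mechanism your proposal never produces, and it is one that avoids stopping times and disjointness altogether: linearizing with the sets $E(R)$ (the points $x\in R$ for which $R$ is a minimal dyadic rectangle with $\mathcal{M}_{\mathcal{R},\alpha}^d\vec f(x)\le 2A_R$), it bounds $\int(\mathcal{M}_{\mathcal{R},\alpha}^d\vec f)^q\nu\,dx$ by the sum over \emph{all} $R\in\mathcal{DR}$ of $\prod_i\sigma_i(R)^{-q/p_i'}\bigl(\int_R f_i\bigr)^q$, separates the product with the inequality $\sum_j\prod_i|a_{ij}|\le\prod_i\bigl(\sum_j|a_{ij}|^{p_i/p}\bigr)^{p/p_i}$, and then applies the Carleson embedding theorem for dyadic rectangles (Lemma \ref{Carleson embedding theorem}) once per index $i$, with exponents $(p_i,\,qp_i/p)$ --- this is where $q>p$ is indispensable, since the embedding fails for $q\le p$ (Remark 3.1). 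That embedding is in turn proved by a Bellman-function induction over scales in the style of Tachizawa, and the dyadic reverse doubling hypothesis enters only there, as the bound $\nu_{I_i}\le b\,\nu_I$ with $b=2^n/d<2^n$ feeding the elementary convexity inequality of Lemma \ref{lemma elementary inequality}. Without this lemma, or a fully worked-out substitute for it, your argument does not close; the corona plan is a conjecture, not a proof, and it is not how the hypothesis actually operates.

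A secondary issue lies in your reduction to the dyadic operator via finitely many adjacent (one-third-shifted) product grids $\mathcal D^{(\ell)}$. The geometric containment is fine, but your hypothesis is dyadic reverse doubling with respect to the \emph{standard} grid, and this does not transfer to shifted grids, whose parent--child pairs are entirely different families of rectangles; so as stated you cannot run the dyadic estimate in the grids $\mathcal D^{(\ell)}$. The paper instead de-dyadicizes by Fefferman--Stein averaging over translations of a truncated operator (Lemma \ref{lemma truncated dyadic strong maximal operator}), combined with Minkowski's integral inequality and the observation that condition (\ref{weight condition}), being a supremum over all rectangles, is translation invariant; one then lets the truncation parameter tend to infinity. (The paper's route also quietly needs the reverse doubling condition for the translated weights, so this caveat is partly shared; but your route adds the grid-transfer problem on top of leaving the central dyadic estimate unproven.)
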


\begin{theorem}[\textbf{One-weighted  estimates for $\mathcal{M}_{\mathcal{R},\alpha}$}]\label{one-weighted estimates}
Let $0<\alpha<mn$, $\frac{1}{p}=\frac{1}{p_{1}}+\cdots+\frac{1}{p_{m}}$ with $1<p_{1},\cdots,p_m<\infty$,
 and $p<q<\infty$ satisfying $\frac{1}{q}=\frac{1}{p}-\frac{\alpha}{n}$.
Then $\vec{\omega} \in A_{({\vec{p}},q),\mathcal{R}}$ if and only if
either of the following two inequalities holds:
\begin{equation}
{\big\|\mathcal{M}_{\mathcal{R},\alpha}(\vec{f})\big\|}_{L^{q,\infty}({\nu_{\vec{\omega} }}^q)} \leq C
\prod_{i = 1}^m{\big\|f_i\big\|}_{L^{p_i}({\omega_i}^{p_i})};
\end{equation}

\begin{equation}
{\big\|\mathcal{M}_{\mathcal{R},\alpha}(\vec{f})\big\|}_{L^q({\nu_{\vec{\omega} }}^q)} \leq C
\prod_{i = 1}^m{\big\|f_i\big\|}_{L^{p_i}({\omega_i}^{p_i})}.
\end{equation}

Similar results hold for $\mathcal{M}_{\mathcal{R},\alpha}^d$ and $A_{({\vec{p}},q),\mathcal{R}}^d$.
\end{theorem}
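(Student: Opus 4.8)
The plan is to obtain Theorem~\ref{one-weighted estimates} as a consequence of the two-weight result Theorem~\ref{two-weighted estimates}, together with the structural facts about $A_{(\vec{p},q),\mathcal{R}}$ to be established in Section~4. The point is that once the weights are chosen correctly, the two-weight testing condition \eqref{weight condition} collapses exactly onto the $A_{(\vec{p},q),\mathcal{R}}$ condition, thanks to the scaling relation $\frac1q=\frac1p-\frac\alpha n$; the only substantive extra input is that membership in $A_{(\vec{p},q),\mathcal{R}}$ already forces the dyadic reverse doubling hypothesis of Theorem~\ref{two-weighted estimates}, so that this hypothesis never has to be assumed separately in the one-weight setting.

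For sufficiency, assume $\vec{\omega}\in A_{(\vec{p},q),\mathcal{R}}$ and apply Theorem~\ref{two-weighted estimates} with the choice $\nu:=\nu_{\vec{\omega}}^{\,q}$ and $\omega_i:=\omega_i^{p_i}$. The right-hand side $\prod_i\|f_i\|_{L^{p_i}(\omega_i^{p_i})}$ is then exactly the target of the one-weight inequalities, while the identity $p_i(1-p_i')=-p_i'$ turns the factor $(\omega_i^{p_i})^{1-p_i'}$ into $\omega_i^{-p_i'}$. Since $\frac1q=\frac1p-\frac\alpha n$ gives $|R|^{\alpha/n+1/q-1/p}=1$, condition \eqref{weight condition} reduces to
\[
\sup_{R\in\mathcal{R}}\Big(\frac1{|R|}\int_R\nu_{\vec{\omega}}^{\,q}\,dx\Big)^{1/q}\prod_{i=1}^m\Big(\frac1{|R|}\int_R\omega_i^{-p_i'}\,dx\Big)^{1/p_i'}<\infty,
\]
which is precisely the $A_{(\vec{p},q),\mathcal{R}}$ condition and hence holds by assumption. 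It remains to check the standing hypothesis of Theorem~\ref{two-weighted estimates}, namely that $(\omega_i^{p_i})^{1-p_i'}=\omega_i^{-p_i'}\in RD^{(d)}$. For this I would invoke the characterization of $A_{(\vec{p},q),\mathcal{R}}$ from Section~4, which places each $\omega_i^{-p_i'}$ in a strong Muckenhoupt class, followed by Proposition~\ref{pro 4.2}, which shows every such weight is dyadic reverse doubling. Granting this, Theorem~\ref{two-weighted estimates} delivers both the strong and the weak one-weight inequalities.

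For necessity I would argue by direct testing rather than through Theorem~\ref{two-weighted estimates}, since the reverse doubling hypothesis is not available before $A_{(\vec{p},q),\mathcal{R}}$ is known. Assuming the weak inequality, fix $R\in\mathcal{R}$ and test with $f_i=\omega_i^{-p_i'}\chi_R$, so that $\int_R|f_i|^{p_i}\omega_i^{p_i}\,dx=\int_R\omega_i^{-p_i'}\,dx$ and, for $x\in R$, $\mathcal{M}_{\mathcal{R},\alpha}(\vec{f})(x)\ge|R|^{-(m-\alpha/n)}\prod_i\int_R\omega_i^{-p_i'}\,dx$. Inserting this lower bound into the weak-type bound and again using $\frac1q=\frac1p-\frac\alpha n$ to cancel every power of $|R|$ yields
\[
\Big(\frac1{|R|}\int_R\nu_{\vec{\omega}}^{\,q}\,dx\Big)^{1/q}\prod_{i=1}^m\Big(\frac1{|R|}\int_R\omega_i^{-p_i'}\,dx\Big)^{1/p_i'}\le C
\]
uniformly in $R$, i.e.\ $\vec{\omega}\in A_{(\vec{p},q),\mathcal{R}}$. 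Combined with the trivial implication from the strong to the weak inequality, this closes the equivalence of the three statements, and the dyadic version follows by the same argument with $\mathcal{R}$, $A_{(\vec{p},q),\mathcal{R}}$ and Theorem~\ref{two-weighted estimates} replaced by their dyadic analogues.

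The step I expect to be the main obstacle is the automatic reverse doubling used in the sufficiency direction: one must genuinely prove that $\vec{\omega}\in A_{(\vec{p},q),\mathcal{R}}$ forces each $\omega_i^{-p_i'}\in RD^{(d)}$, rather than assuming it. I anticipate this resting on first factorizing the $A_{(\vec{p},q),\mathcal{R}}$ condition into strong Muckenhoupt conditions on $\nu_{\vec{\omega}}^{\,q}$ and on the individual $\omega_i^{-p_i'}$ in Section~4, and then on Proposition~\ref{pro 4.2} to pass from such strong $A$-membership to the reverse doubling property. Everything else is bookkeeping, the one point requiring attention being the change of normalization $\omega_i\mapsto\omega_i^{p_i}$, which is exactly what reconciles the $\omega_i^{1-p_i'}$ of Theorem~\ref{two-weighted estimates} with the $\omega_i^{-p_i'}$ appearing in $A_{(\vec{p},q),\mathcal{R}}$.
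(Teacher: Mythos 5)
Your proposal is correct and follows essentially the same route as the paper: deduce $\omega_i^{-p_i'}\in RD^{(d)}$ from Proposition \ref{characterization of weight class}(b) together with Proposition \ref{pro 4.2}(i), then apply Theorem \ref{two-weighted estimates} with $\omega_i^{p_i}$ and $\nu_{\vec{\omega}}^{\,q}$ in place of $\omega_i$ and $\nu$, the scaling $\frac1q=\frac1p-\frac{\alpha}{n}$ making $|R|^{\alpha/n+1/q-1/p}=1$ so that condition \eqref{weight condition} collapses to $A_{(\vec p,q),\mathcal R}$. Your extra care in running the necessity direction by direct testing with $f_i=\omega_i^{-p_i'}\chi_R$ (so as not to presuppose reverse doubling) is a legitimate refinement, but it is exactly the testing argument the paper already uses inside its proof of Theorem \ref{two-weighted estimates}, where that implication indeed never invokes the $RD^{(d)}$ hypothesis.
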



\section{Proof of Theorem 2.1}

To prove Theorem \ref{two-weighted estimates}, we need the following lemmas.
\begin{lemma}[\cite{Tachizawa}]\label{lemma elementary inequality}
Let $n$ be a positive integer, $1<p<q<\infty$, and $0<b<2^n$.
Let$$D=\{(F,f,\nu);F\geq 0, \nu > 0, 0\leq f \leq F^{1/p} \nu^{1/{p'}} \}.$$
Then there is a positive constant $ C $ such that
\begin{equation}\label{elementary inequality}
\left(F-\frac{f^p}{2{\nu}^{p/{p^{'}}}}\right)^{q/p}\geq C\frac{f^q}{{\nu}^{q/{p^{'}}}}+
\frac{1}{2^{nq/p}} \sum_{i=1}^{2^n}\left(F_i-\frac{f_i^p}{2{\nu_i}^{p/{p'}}}\right)^{q/p}
\end{equation}
for all $(F,f,\nu),(F_i,f_i,\nu_i)\in D, i=1,\cdots,2^n$,
such that $$  F=\frac{1}{2^n}(F_1+\cdots+F_{2^n}),\ \
f=\frac{1}{2^n}(f_1+\cdots+f_{2^n}),\ \
\nu=\frac{1}{2^n}(\nu_1+\cdots+\nu_{2^n}) $$
 and $\nu_i\leq b\nu$, $i=1,\cdots,2^n$.
\end{lemma}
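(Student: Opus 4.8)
The plan is to prove the inequality by exploiting convexity and the averaging structure relating $(F,f,\nu)$ to the $(F_i,f_i,\nu_i)$. First I would set $\phi(t)=t^{q/p}$, which is convex on $[0,\infty)$ since $q>p$. The natural starting point is to write $G_i=F_i-\frac{f_i^p}{2\nu_i^{p/p'}}$ and $G=F-\frac{f^p}{2\nu^{p/p'}}$, and observe that the constraint set $D$ guarantees each $G_i\geq 0$ (because $0\le f_i\le F_i^{1/p}\nu_i^{1/p'}$ forces $f_i^p\le F_i\nu_i^{p/p'}$, hence $\frac{f_i^p}{2\nu_i^{p/p'}}\le \frac12 F_i\le F_i$). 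The heart of the argument is a \emph{superadditivity} estimate: I want to show that $G$ dominates the average of the $G_i$ by a definite amount, where the surplus is controlled below by a multiple of $\frac{f^q}{\nu^{q/p'}}$ after applying $\phi$.

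The key step is to compare $\frac{f^p}{\nu^{p/p'}}$ with the average $\frac{1}{2^n}\sum_i \frac{f_i^p}{\nu_i^{p/p'}}$. Since $f=\frac{1}{2^n}\sum f_i$ and $\nu=\frac{1}{2^n}\sum \nu_i$, and the function $(s,t)\mapsto s^p/t^{p/p'}=s^p t^{1-p}$ is jointly convex on the positive quadrant (it is a perspective-type function, convex because $p>1$), Jensen's inequality yields
\begin{equation*}
\frac{f^p}{\nu^{p/p'}}\leq \frac{1}{2^n}\sum_{i=1}^{2^n}\frac{f_i^p}{\nu_i^{p/p'}}.
\end{equation*}
Combined with $F=\frac{1}{2^n}\sum F_i$, this gives $G\geq \frac{1}{2^n}\sum_i G_i + \frac12\big(\frac{1}{2^n}\sum_i \frac{f_i^p}{\nu_i^{p/p'}}-\frac{f^p}{\nu^{p/p'}}\big)$, where the last term is the nonnegative Jensen gap. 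I would then quantify this gap from below using the hypothesis $\nu_i\le b\nu$ with $b<2^n$, which prevents the masses from concentrating and forces a genuine strict-convexity surplus proportional to $\frac{f^p}{\nu^{p/p'}}$.

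The final step applies $\phi(t)=t^{q/p}$ together with its own superadditivity. For the convex increasing $\phi$ with $\phi(0)=0$, one has $\phi(a+b)\ge \phi(a)+\phi(b)$ for $a,b\ge0$, and more precisely $\phi\big(\frac{1}{2^n}\sum G_i + \delta\big)\ge \frac{1}{2^{nq/p}}\sum\phi(G_i)+\phi(\delta)$ after using superadditivity on the sum $\frac{1}{2^n}\sum G_i=\frac{1}{2^n}\sum \big(\frac{1}{2^n}\sum\cdots\big)$ and the elementary bound $(\sum a_i)^{q/p}\ge \sum a_i^{q/p}$; the factor $2^{-nq/p}$ arises from pulling the $\frac{1}{2^n}$ averaging weight through the power $q/p$. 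Translating the surplus term $\phi(\delta)$ where $\delta\gtrsim \frac{f^p}{\nu^{p/p'}}$ produces exactly $C\frac{f^q}{\nu^{q/p'}}$, which is the claimed first term on the right. The main obstacle I anticipate is \textbf{obtaining the quantitative lower bound on the Jensen gap}: naive convexity gives only that the gap is nonnegative, and extracting a constant $C>0$ depending on $b$ and $2^n$ (but not on the data) requires carefully using the constraint $\nu_i\le b\nu$ to bound the second-order remainder in the convexity inequality for $s^p t^{1-p}$ away from degeneracy. This is where the condition $b<2^n$ is essential, since as $b\uparrow 2^n$ the configuration can approach one where a single index carries all the mass and the gap can collapse.
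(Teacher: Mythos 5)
A preliminary remark: the paper does not actually prove this lemma --- it is quoted verbatim from \cite{Tachizawa} --- so your plan can only be judged on its own merits, and on those merits it has a genuine gap. Your reductions are fine up to a point: $G_i\ge 0$ (in fact the constraint $f_i\le F_i^{1/p}\nu_i^{1/p'}$ gives the stronger $G_i\ge F_i/2\ge \frac{f_i^p}{2\nu_i^{p/p'}}$, which is exactly why the factor $\tfrac12$ is built into the definition), the joint convexity of $(s,t)\mapsto s^pt^{1-p}$ and Jensen do give $G\ge \frac{1}{2^n}\sum_i G_i+\frac{\delta}{2}$ with $\delta\ge 0$ the Jensen gap, and your double-superadditivity step $\bigl(\frac{1}{2^n}\sum_i G_i+\frac{\delta}{2}\bigr)^{q/p}\ge 2^{-nq/p}\sum_i G_i^{q/p}+(\delta/2)^{q/p}$ is valid. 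The fatal step is the one you yourself flagged as the main obstacle: the claim that $\nu_i\le b\nu$ with $b<2^n$ forces $\delta\gtrsim f^p/\nu^{p/p'}$. This is false, and not just hard to prove. Take the fully symmetric configuration $(F_i,f_i,\nu_i)=(F,f,\nu)$ for every $i$: since $\nu$ is the average of the $\nu_i$, the hypothesis $\nu_i\le b\nu$ forces $b\ge 1$ whenever the hypothesis set is nonempty, so this configuration is \emph{always} admissible and can never be excluded by the constraint. There $\delta=0$ while $f>0$, so your scheme produces no surplus term at all, i.e., it proves the inequality only with $C=0$. In this configuration the surplus has an entirely different source, which your scheme discards: the right-hand side's second term equals $2^{n(1-q/p)}G^{q/p}$ with $2^{n(1-q/p)}<1$ because $q>p$, and the built-in bound $G\ge \frac{f^p}{2\nu^{p/p'}}$ then yields $\bigl(1-2^{n(1-q/p)}\bigr)G^{q/p}\ge c\,f^q/\nu^{q/p'}$; your use of full superadditivity $(\sum_i G_i)^{q/p}\ge\sum_i G_i^{q/p}$ throws away precisely this slack.

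Your intuition about $b<2^n$ is directionally right but operates in the opposite regime from where you deploy it. The constraint matters in the \emph{concentrated} case: if, say, $f_{i_0}=2^nf$ and $f_i=0$ otherwise, then $\delta\ge\bigl((2^n/b)^{p-1}-1\bigr)f^p/\nu^{p/p'}>0$, and at $b=2^n$ (all of the $f$- and $\nu$-mass on one child) the inequality genuinely degenerates, so the hypothesis is indeed essential there --- but the zero-gap enemy of your argument is the fully spread, symmetric configuration, which ``prevents the masses from concentrating'' does nothing against. A correct proof must combine the two mechanisms, e.g.\ by splitting according to whether $\max_i G_i$ exceeds a fixed fraction of $\sum_i G_i$: in the spread case the strict superadditivity slack of $t\mapsto t^{q/p}$ together with $\sum_i G_i\ge\sum_i\frac{f_i^p}{2\nu_i^{p/p'}}\ge 2^{n-1}f^p/\nu^{p/p'}$ (Jensen again) supplies the surplus; in the concentrated case your Jensen-gap mechanism, or a first-order bound of the form $(G_{i_0}+s)^{q/p}\ge G_{i_0}^{q/p}+\frac{q}{p}\,G_{i_0}^{q/p-1}s$ combined with $G_{i_0}\ge\frac{f_{i_0}^p}{2\nu_{i_0}^{p/p'}}$, does. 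As written, the proposal does not prove the lemma.
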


\noindent{\bf {Remark 3.1.}}
When $1<p,q<\infty$ and $p \geq q$, the above inequality (\ref{elementary inequality}) doesn't hold.
In fact, when $n=1$, we only need to take $F_i=f_i=\nu_i=1$, $i=1,2$.

In order to establish two-weight estimates for multilinear fractional strong maximal operator
we need the next Carleson embedding theorem regarding dyadic rectangles, which is crucial to
the proof of main results.

\begin{lemma}[Carleson Embedding Theorem Regarding Dyadic Rectangles]\label{Carleson embedding theorem}
Let $1<p<q<\infty$, $\omega$ be a nonnegative locally integrable function on $\mathbb{R}^n$.
Assume that $\omega^{1-p'}$ satisfies the dyadic reverse doubling condition.
Then the inequality
$$\sum_{I\in \mathcal{DR}} \left(\int_I {\omega}^{1-p'}dx \right)^{-q/{p'}} \left(\int_I f(x)dx \right)^q
 \leq C \left(\int_{\mathbb{R}^n}f(x)^p \omega dx \right)^{q/p}$$
holds for all nonnegative $f\in L^p(\omega)$.
\end{lemma}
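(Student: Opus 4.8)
My strategy is to peel the embedding down to the one-variable elementary inequality of Lemma~\ref{lemma elementary inequality} by a single Hölder step, and then to run a telescoping (martingale) argument over the dyadic grid. First I would dualize: put $\sigma=\omega^{1-p'}$. Since $\omega^{-p'/p}=\omega^{1-p'}=\sigma$, Hölder's inequality with exponents $p,p'$ gives for every $I\in\mathcal{DR}$
$$\int_I f\,dx\le\Big(\int_I f^p\omega\,dx\Big)^{1/p}\Big(\int_I\sigma\,dx\Big)^{1/p'}.$$
Writing $\tilde F_I=|I|^{-1}\int_I f^p\omega$, $\tilde\nu_I=|I|^{-1}\int_I\sigma$ and $\tilde f_I=|I|^{-1}\int_I f$, this is exactly the statement that $(\tilde F_I,\tilde f_I,\tilde\nu_I)\in D$, and a computation using $1/p+1/p'=1$ identifies the Carleson summand,
$$\Big(\int_I\sigma\Big)^{-q/p'}\Big(\int_I f\Big)^q=|I|^{q/p}\,\frac{\tilde f_I^{\,q}}{\tilde\nu_I^{\,q/p'}},$$
with the leading term on the right of \eqref{elementary inequality}, up to the scaling factor $|I|^{q/p}$.

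Next I would build the recursion. For $I\in\mathcal{DR}$ let $I_1,\dots,I_{2^n}$ be the $2^n$ children obtained by halving every side; then $\tilde F_I=2^{-n}\sum_i\tilde F_{I_i}$, and likewise for $\tilde\nu$ and $\tilde f$. Here the hypothesis that $\sigma$ satisfies the dyadic reverse doubling condition enters: from $d\int_{I_i}\sigma\le\int_I\sigma$ and $|I_i|=2^{-n}|I|$ one gets $\tilde\nu_{I_i}\le(2^n/d)\tilde\nu_I$, i.e. the condition $\nu_i\le b\nu$ of Lemma~\ref{lemma elementary inequality} with $b=2^n/d<2^n$. Applying that lemma and multiplying by $|I|^{q/p}$ (using $|I|^{q/p}2^{-nq/p}=|I_i|^{q/p}$) yields the pointwise inequality
$$G_I\ge C\Big(\int_I\sigma\Big)^{-q/p'}\Big(\int_I f\Big)^q+\sum_{i=1}^{2^n}G_{I_i},\qquad G_I:=|I|^{q/p}\Big(\tilde F_I-\frac{\tilde f_I^{\,p}}{2\tilde\nu_I^{\,p/p'}}\Big)^{q/p}\ge0.$$
Iterating this down a chain of nested rectangles of a fixed shape makes the $G$-terms telescope, bounding the sum of the Carleson summands over such a tree by its top term, which satisfies $G_I\le(\int_I f^p\omega)^{q/p}$; summing the latter over one disjoint generation and using $q/p\ge1$ gives the bound $\tfrac1C(\int f^p\omega)^{q/p}$ for the sum over a single tree.

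The main obstacle is precisely what distinguishes rectangles from cubes: the family $\mathcal{DR}$ is not a single tree under the halve-all-sides operation but a countable disjoint union of trees indexed by aspect ratio, so the per-tree bound above, summed over all shapes, over-counts and diverges. Overcoming this is where the reverse doubling condition must do work beyond merely feeding Lemma~\ref{lemma elementary inequality}. The route I would pursue is to sum one coordinate direction at a time (equivalently, to induct on the number of parameters), passing to the one-dimensional marginals $\rho_{R'}(t)=\int_{R'}\sigma(x',t)\,dx'$ and $g_{R'}(t)=\int_{R'}f(x',t)\,dx'$; Hölder again gives $g_{R'}(t)\le(\int_{R'}f^p\omega\,dx')^{1/p}\rho_{R'}(t)^{1/p'}$, and the reverse doubling condition---applied to the sub-rectangles that contract a single coordinate by a factor $2^n$---descends to $\rho_{R'}$ and drives the telescoping in that direction. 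The genuinely delicate point, and the one I expect to cost the most effort, is to carry out this reduction without discarding the weight factors $(\int_I\sigma)^{-q/p'}$, so that after summing out one variable the residual outer sum is again a bona fide Carleson sum to which the lower-dimensional case applies.
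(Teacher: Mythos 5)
Your first half is, step for step, the paper's own argument (which follows Tachizawa \cite{Tachizawa}): the H\"older step showing $(\tilde F_I,\tilde f_I,\tilde\nu_I)\in D$, the identity $\bigl(\int_I\sigma\bigr)^{-q/p'}\bigl(\int_I f\bigr)^q=|I|^{q/p}\tilde f_I^{\,q}/\tilde\nu_I^{\,q/p'}$, the deduction $\tilde\nu_{I_i}\le b\tilde\nu_I$ with $b=2^n/d<2^n$ from the dyadic reverse doubling of $\sigma=\omega^{1-p'}$, and the telescoping of the scaled Bellman quantity $G_I$ with the top bound $G_I\le\bigl(\int_I f^p\omega\,dx\bigr)^{q/p}$ are all exactly as in the paper's proof of Lemma \ref{Carleson embedding theorem}. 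Where you diverge is at what you call the main obstacle, and your diagnosis there is correct and pointed: iterating the halve-all-sides recursion from a fixed $I$ visits only the sub-rectangles \emph{similar} to $I$ (every side halved the same number of times), whereas the paper's displayed iterate claims a lower bound by the sum over \emph{all} dyadic $J\subset I$ with $|J|\ge 2^{-nk}|I|$ --- for instance $J=[0,\tfrac12]\times[0,1]$ inside $I=[0,1]^2$ is in that index set but is never produced by the recursion. For dyadic cubes the two index sets coincide and Tachizawa's argument closes; for $\mathcal{DR}$ they do not, so the paper's write-up is, strictly read, incomplete at precisely this step. You are also right that the naive repair fails: each fixed-aspect-ratio tree yields a bound of size $\bigl(\int_I f^p\omega\bigr)^{q/p}$ with no decay in the shape parameter, and there are countably many shape classes.

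That said, your own proposal does not close the gap either: the induction on the number of parameters is announced, not executed, and the missing step is the substantive one. Two concrete holes: (i) you never verify that the marginal $\rho_{R'}(t)=\int_{R'}\sigma(x',t)\,dx'$ satisfies a usable one-dimensional reverse doubling --- this is in fact available, since the $RD^{(d)}$ hypothesis is imposed for \emph{every} pair $I\subset J$ in $\mathcal{DR}$ with $|I|=2^{-n}|J|$, hence in particular when a single coordinate is contracted by the factor $2^n$, and Lemma \ref{lemma elementary inequality} is a purely numerical statement, so it can be run on $2^n$-fold equal subdivisions of an interval; but (ii) the step you yourself flag as delicate --- showing that after summing out one coordinate the residual sum, with the factors $\bigl(\int\sigma\bigr)^{-q/p'}$ intact, is again a Carleson sum of the same type with an admissible lower-dimensional weight --- is the entire content of the multiparameter theorem, essentially what Kokilashvili and Meskhi carry out in \cite{KM} (compare also Corollary \ref{cor 4.3}). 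As written, your argument (and, on a literal reading, the paper's) proves the embedding only with $\mathcal{DR}$ replaced by a family of rectangles of a fixed aspect ratio. So: correct core mechanism, a sharp and justified criticism of the paper's iteration step, but the proposed repair remains a program rather than a proof; either carry out the parameter induction in detail or find a genuinely multiparameter substitute for the single-tree telescoping.
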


\noindent\textbf{Proof of Lemma 3.2}.
\begin{proof}
The proof of this conclusion is a routine application of the method of Theorem
$1.1$ \cite{Tachizawa}.
However, we here present the proof for the sake of completeness.

For a given $I \in \mathcal{DR}$, it suffices to show
\begin{equation}\label{e 3.2}
\sum_{\substack{J\subset I\\ J\in \mathcal{DR}}} \left(\int_J \omega^{1-p'}dx
\right)^{-q/{p'}} \left( \frac{1}{|J|} \int_J f(x)dx \right)^q
\leq C \left(\int_I f(x)^p \omega dx \right)^{q/p}
\end{equation}
for all nonnegative locally integrable functions $f$, where $C$ is a constant
which does not depend on $I$.

Let $D$ be the domain in Lemma \ref{lemma elementary inequality}. For
$(F,f,\nu)\in D$, we set
$$
B(F,f,\nu)=\frac{1}{c}\left(F-\frac{f^p}{2\nu^{p/{p'}}} \right)^{q/p}
$$
where $c$ is the constant in Lemma \ref{lemma elementary inequality}.
Let $f$ be a nonnegative measurable function such that
$$ \int_I f(x)^p \omega dx < \infty.$$
Denote
$$
F_A=\frac{1}{|A|}\int_A f(x)^p \omega dx,\ \ f_A=\frac{1}{|A|}\int_A f(x)dx
$$
and
$$ \nu_A=\frac{1}{|A|}\int_A \omega(x)^{1-p'}dx,
$$
for a measurable set $A$ in $I$ such that $|A|\neq 0$.
Then, by H\"{o}lder's inequality, we have
$$
\frac{1}{|I|}\int_I f(x)dx
\leq \left(\frac{1}{|I|}\int_I f(x)^p \omega dx \right)^{1/p}
 \left(\frac{1}{|I|}\int_I \omega^{-p'/p}dx \right)^{1/{p'}}  .
$$
That is to say $$ 0 \leq f_I \leq F_I^{1/p}\nu^{1/{p'}}.$$
Then we get $(F_I,f_I,\nu_I)\in D $.
Let $I_1,\cdots, I_{2^n} $ be dyadic rectangles which are obtained by dividing
$I$ into $2^n$ equal parts. Then we have
$$
(F_{I_i},f_{I_i},\nu_{I_i})\in D,\ \ i=1,\cdots,n
$$
$$
F_I=\frac{1}{2^n}(F_{I_1}+\cdots+F_{I_{2^n}}),\ \ f_I
=\frac{1}{2^n}(f_{I_1}+\cdots+f_{I_{2^n}}),
$$
$$
\nu_I=\frac{1}{2^n}(\nu_{I_1}+\cdots+\nu_{I_{2^n}}).
$$
Moreover, by the dyadic reverse doubling condition for $\omega^{1-p'}$, we can
get $\nu_{I_i} \leq b \nu_I$,
$i=1,\cdots,n$, where $b=2^n / d < 2^n $.
Hence, according to Lemma \ref{lemma elementary inequality}, we have
$$
B(F_I,f_I,\nu_I) \geq \frac{f_I^q}{\nu_I^{q/{p'}}}
+\frac{1}{2^{nq/p}} \sum_{i=1}^{2^n}B(F_{I_i},f_{I_i},\nu_{I_i}),
$$
and hence, we obtain the inequality
$$
|I|^{q/p} B(F_I,f_I,\nu_I) \geq |I|^{q/p}\nu_I^{-q/{p'}}f_I^q
+ \sum_{1}^{2^n} |I_i|^{q/p} B(F_{I_i},f_{I_i},\nu_{I_i}).
$$
Using the same technique for $I_i$, $i=1,\cdots,n$, we can get
\begin{align*}
&|I|^{q/p}B(F_I,f_I,\nu_I)
\geq \sum_{\substack{J\subset I,J\in \mathcal{DR}\\ |J|\geq 2^{-nk}|I|}} |J|^{q/p}\nu_J^{-q/{p'}}f_J^q +
\sum_{\substack{J\subset I,J\in \mathcal{DR}\\ |J|=2^{-n(k+1)}|I|}} |J|^{q/p} B(F_J,f_J,\nu_J)\\
&\geq \sum_{\substack{J\subset I,J\in \mathcal{DR}\\ |J|\geq 2^{-nk}|I|}} |J|^{q/p}\nu_J^{-q/{p'}}f_J^q
= \sum_{\substack{J\subset I, J\in \mathcal{DR}\\ |J|\geq 2^{-nk}|I|}} \left(\int_J \omega^{1-p'}dx \right)^{-q/{p'}} \left(\int_J f(x)dx \right)^q .
\end{align*}
Letting $k$ approach $\infty$, we have
\begin{align*}
&\sum_{\substack{J\subset I\\ J\in \mathcal{DR}}} \left(\int_J \omega^{1-p'}dx \right)^{-q/{p'}} \left(\int_J f(x)dx \right)^q \leq |I|^{q/p}B(F_I,f_I,\nu_I)\\
&=|I|^{q/p} \frac{1}{c}\left(F_I-\frac{f_I^p}{2\nu_I^{p/{p'}}} \right)^{q/p} \leq C'|I|^{q/p}F_I^{q/p}=C'\left(\int_I f(x)^p \omega dx \right)^{q/p}.
\end{align*}
\end{proof}

Before proving Theorem \ref{two-weighted estimates}, we also need the following key lemma, which connects
the maximal operator with the corresponding dyadic version maximal operator.
\begin{lemma}\label{lemma truncated dyadic strong maximal operator}
Let $x,t \in \mathbb{R}^n$, $0<\alpha<mn$.
For any $\vec{f}\geq 0$, $ k\geq 0$, we define the following truncated dyadic version strong maximal operator,
\begin{align*}
\mathcal{M}_{\mathcal{R},\alpha}^{(k)}(\vec{f})(x):=
\sup_{\substack{R \ni x, R\in\mathcal R, \\ \text{every side length of }R\le 2^k}}
\prod_{i=1}^m \frac{1}{|R|^{1-\frac{\alpha}{mn}}} \int_R f_i(y_i)dy_i.
\end{align*}
Then we have,
$$ \mathcal{M}_{\mathcal{R},\alpha}^{(k)}(\vec{f})(x)\leq \frac{C_{n,\alpha}}{|B_k|}\int_{B_k}
\tau_{-t} \circ \mathcal{M}_{\mathcal{R},\alpha}^d \circ \vec{\tau}_t (\vec{f})(x) dt,
\ for \ any \ k\geq 0 \ ,$$
where $B_k=[-2^{k+2},2^{k+2}]^n$, $\tau_t g(x) =g(x-t)$, $\vec{\tau}_t \vec{f}=(\tau_t f_1,\cdots,\tau_t f_m)$.
\end{lemma}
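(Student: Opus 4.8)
The plan is to realize the operator $\tau_{-t}\circ\mathcal{M}^d_{\mathcal{R},\alpha}\circ\vec{\tau}_t$ as a \emph{shifted dyadic} fractional strong maximal operator, and then to show that for each rectangle competing in the supremum defining $\mathcal{M}^{(k)}_{\mathcal{R},\alpha}$, a fixed proportion of the translations $t\in B_k$ make that rectangle sit inside a dyadic rectangle of comparable size. First I would unwind the notation: writing $[\vec{\tau}_t\vec{f}]_i(y)=f_i(y-t)$ and $[\tau_{-t}h](x)=h(x+t)$, the change of variables $u_i=y_i-t$ gives
$$\tau_{-t}\circ\mathcal{M}^d_{\mathcal{R},\alpha}\circ\vec{\tau}_t(\vec{f})(x)=\sup_{\substack{Q\ni x+t\\ Q\in\mathcal{DR}}}\prod_{i=1}^m\frac{1}{|Q|^{1-\frac{\alpha}{mn}}}\int_{Q-t}f_i(u_i)\,du_i.$$
Thus the quantity being averaged is the fractional strong maximal function of $\vec f$ taken over the translated dyadic rectangles $Q-t$ that contain $x$ (note $Q\ni x+t\iff Q-t\ni x$ and $|Q-t|=|Q|$).

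Next I would fix an arbitrary rectangle $R=\prod_{l=1}^n[a_l,a_l+\ell_l]\ni x$ with all side lengths $\ell_l\le 2^k$, which nearly attains the supremum defining $\mathcal{M}^{(k)}_{\mathcal{R},\alpha}(\vec f)(x)$. For each coordinate $l$ choose $j_l\in\mathbb{Z}$ with $2^{j_l}<\ell_l\le 2^{j_l+1}$; since $\ell_l\le 2^k$ this forces $j_l\le k-1$. The heart of the argument is the geometric claim that
$$E_R=\Big\{t\in B_k:\ R+t\subset Q\text{ for some }Q\in\mathcal{DR}\text{ of }l\text{-th side length }2^{j_l+2}\Big\}$$
has measure $|E_R|\ge 2^{-n}|B_k|$. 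I would prove this coordinate by coordinate: in the $l$-th coordinate the translate $[a_l+t_l,\,a_l+\ell_l+t_l]$ lies inside a single dyadic interval of length $2^{j_l+2}$ unless it straddles a point of $2^{j_l+2}\mathbb{Z}$, and since $\ell_l\le 2^{j_l+1}=\tfrac12\cdot 2^{j_l+2}$ the good $t_l$ form a periodic set of density at least $\tfrac12$ in each period $2^{j_l+2}$. Because $2^{k+2}\in 2^{j_l+2}\mathbb{Z}$, the projection of $B_k$ is a union of full aligned periods, so the density stays $\ge\tfrac12$ on $B_k$; Fubini across the $n$ coordinates then delivers the factor $2^{-n}$.

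Finally I would run the comparison. For $t\in E_R$ the associated dyadic rectangle $Q=Q(t)$ satisfies $R+t\subset Q$, hence $Q\ni x+t$ (as $x\in R$) and $R\subset Q-t$, while $|Q|=\prod_l 2^{j_l+2}<4^n\prod_l\ell_l=4^n|R|$. Using $f_i\ge0$ so that $\int_{Q-t}f_i\ge\int_R f_i$, together with $|Q|^{-(1-\frac{\alpha}{mn})}\ge (4^n|R|)^{-(1-\frac{\alpha}{mn})}$, I obtain for every $t\in E_R$ the pointwise lower bound
$$\tau_{-t}\circ\mathcal{M}^d_{\mathcal{R},\alpha}\circ\vec{\tau}_t(\vec f)(x)\ge (4^n)^{-(m-\alpha/n)}\prod_{i=1}^m\frac{1}{|R|^{1-\frac{\alpha}{mn}}}\int_R f_i(y_i)\,dy_i.$$
Since the integrand is nonnegative, integrating over $B_k$ and discarding the $t\notin E_R$ gives a lower bound carrying the factor $|E_R|/|B_k|\ge 2^{-n}$; taking the supremum over all admissible $R$ (the constant being independent of $R$) reproduces $\mathcal{M}^{(k)}_{\mathcal{R},\alpha}(\vec f)(x)$ on the right, and rearranging yields the claim with $C_{n,\alpha}=2^n(4^n)^{m-\alpha/n}$. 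The main obstacle is precisely the geometric covering claim, namely bounding from below the proportion of translations that embed $R$ into a comparable dyadic rectangle; the product structure of rectangles and the alignment of $B_k$ with the relevant dyadic grids at each scale $2^{j_l+2}$ are exactly what make this tractable.
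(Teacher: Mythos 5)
Your proof is correct and follows essentially the same path as the paper's: the paper likewise fixes a near-optimal rectangle $R$, shows coordinate-by-coordinate that for at least half of the translations in each coordinate the corresponding side sits inside a dyadic interval of length at most $4$ times its own (yielding a set of good $t$ of density $2^{-n}$ in $B_k$), and then averages, with the same constant $2^n 4^{mn-\alpha}$. The only difference is cosmetic: the paper quotes the per-coordinate density observation from Garc\'{i}a-Cuerva--Rubio de Francia (and follows Kokilashvili--Meskhi), whereas you prove it directly via the periodicity argument, which makes your write-up self-contained and, incidentally, cleaner than the paper's statement of the observation, which contains typos in the range of $t$ and the scale $2^{n_j+1}$.
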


We notice that this inequality, for Hardy-Littlewood maximal operators, was
established by Fefferman and Stein (see \cite[p.~431]{GR}). For fractional
maximal operators, it is due to Saywer (see \cite{Sawyer1}, \cite{Sawyer2}).
\begin{proof}
Our method is similar to \cite{KM}.

First we shall need the following observation (see \cite[p.~431]{GR}).
Let $j$ be an integer and $I$ be an interval satisfying $2^{j-1}|I| \leq 2^j$.
Suppose that $k \in \mathbb{Z}$, $j \leq k$.
Denote the set
$$
E:=\{t\in (-2^{k-2},2^{k-2});\exists J \text{ such that } |J|=2^{j+1},
I\subset J, J+t \text{ dyadic }\}.
$$
Then, $|E|\geq2^{k+2}$.\\

For given $\vec{f}\geq 0$ and $x=(x_1,\cdots,x_n)\in \mathbb{R}^n$,
we take interval $I_j\ni x_j$, $|I_j|\leq 2^k$, for all $j=1,\cdots,n$, such that
$$ \mathcal{M}_{\mathcal{R},\alpha}^{(k)}(\vec{f})(x)<
2\prod_{i=1}^m \frac{1}{|R|^{1-\frac{\alpha}{mn}}} \int_R f_i(y_i)dy_i $$
where the rectangle $R=I_1 \times \cdots \times I_n$.
Now, for each $j=1,\cdots,n$, we take integers $n_j$ such that $2^{n_j-1}<|I_j| \leq 2^{n_j}$.
It is obvious that $n_j \leq k$.
In addition, let us denote the sets $E_j$ by the following way:
$$E_j:=\{t\in (-2^{k-2},2^{k-2});there \ exists \ I_j' \  such \ that \ |I_j'|=2^{j+1},
I_j\subset I_j', I_j'+t \ dyadic  \}$$
for each $j$.
Denote $E=E_1 \times \cdots \times E_n$.
Then for each $t=(t_1,\cdots,t_n) \in E$,
there exist intervals $\{I_j'\}_{j=1}^n$ such that $I_j \subset I_j'$ and $I_j'+t_j$ dyadic, $i=1,\cdots,n$.
Let $R^{''}=(I_1'+t_1)\times \cdots \times(I_n'+t_n)$. Therefore $R\subset {R^{''}-t}$ and $R^{''}$ is a dyadic rectangle.
It follows that
\begin{align*}
\mathcal{M}_{\mathcal{R},\alpha}^{(k)}(\vec{f})(x)
&<2\prod_{i=1}^m \frac{1}{|R|^{1-\frac{\alpha}{mn}}} \int_R f_i(y_i)dy_i
\\
&\leq 2\prod_{i=1}^m \left( \frac{|R''|}{|R|}\right)^{1-\frac{\alpha}{mn}}
\frac{1}{|R''|^{1-\frac{\alpha}{mn}}} \int_{R''-t}f_i(y_i)dy_i
\\
&\leq 2\times 4^{mn-\alpha}\sup_{\substack{R-t\ni x\\ R\in \mathcal{DR}}}
\prod_{i=1}^m \frac{1}{|R|^{1-\frac{\alpha}{mn}}} \int_{R-t}f_i(y_i)dy_i
\end{align*}
where for any rectangle $R=I_1 \times \cdots \times I_n$,
we denote $R-t=(I_1-t_1) \times \cdots \times (I_n-t_n)$.
Notice that $E \subset B_k$ and $|E| \geq \frac{1}{2^n}|B_k|$.
Hence,
\begin{align*}
\mathcal{M}_{\mathcal{R},\alpha}^{(k)}(\vec{f})(x)
&\leq \frac{C_{n,\alpha}}{|E|}
\int_E \tau_{-t} \circ \mathcal{M}_{\mathcal{R},\alpha}^d \circ \vec{\tau}_t (\vec{f})(x) dt \\
&\leq 2^n \times \frac{C_{n,\alpha}}{|B_k|}
\int_{B_k} \tau_{-t} \circ \mathcal{M}_{\mathcal{R},\alpha}^d \circ \vec{\tau}_t (\vec{f})(x) dt .
\end{align*}
The lemma has been proved.
\end{proof}

\noindent\textbf{Proof of Theorem 2.1}.
\begin{proof}
In fact, we only need to prove (\ref{strong inequality}) $\Rightarrow$ (\ref{weak inequality})
$\Rightarrow$ (\ref{weight condition}) $\Rightarrow$ (\ref{strong inequality}).

It is obvious that (\ref{strong inequality}) $\Rightarrow$ (\ref{weak inequality}).\\
(\rm{I}). First of all, we will prove (\ref{weak inequality}) implies (\ref{weight condition}).

We can assume that $\vec{f}\geq 0$, for fixed rectangle $R$,
$$\prod_{i=1}^m \frac{1}{|R|^{1-\frac{\alpha}{mn}}} \int_R f_i(y_i)dy_i \geq 0 .$$
For $x\in R$, we have
$$\prod_{i=1}^m \frac{1}{|R|^{1-\frac{\alpha}{mn}}} \int_R f_i(y_i)dy_i
\leq \mathcal{M}_{\mathcal{R},\alpha}(f_1\chi_R,\cdots,f_m\chi_R)(x).$$
Therefore, for any $$ 0 < \lambda < \prod_{i=1}^m \frac{1}{|R|^{1-\frac{\alpha}{mn}}} \int_R f_i(y_i)dy_i,$$
we have $R\subset \{x\in\Rn;\mathcal{M}_{\mathcal{R},\alpha}(f_1\chi_R,\cdots,f_m\chi_R)(x) >\lambda \}$.\\
Hence, by (\ref{weak inequality}) we get
\begin{align*}
\nu(R)
\leq \nu(\{x\in\Rn;\mathcal{M}_{\mathcal{R},\alpha}^d(f_1\chi_R,\cdots,f_m\chi_R)(x) >\lambda \})
\leq \left(\frac{C}{\lambda}\prod_{i = 1}^m{\big\|f_i\big\|}_{L^{p_i}(\omega_i)} \right)^q.
\end{align*}
Letting $$\lambda\rightarrow \prod_{i=1}^m \frac{1}{|R|^{1-\frac{\alpha}{mn}}} \int_R f_i(y_i)dy_i,$$
we obtain
$$\sup_{R\in \mathcal{R}} |R|^{\alpha/n-m}\nu(R)^{\frac{1}{q}}\prod_{i=1}^m \int_R f_i(y_i) dy_i
\leq C \prod_{i = 1}^m{\big\|f_i\big\|}_{L^{p_i}(\omega_i)} .$$
Taking $f_i=\omega_i^{1-p_i'}$, we get
$$\sup_{R\in \mathcal{R}} |R|^{\alpha/n-m}\nu(R)^{\frac{1}{q}} \prod_{i=1}^m \int_R \omega_i^{1-p_i'}dx
\leq C \prod_{i = 1}^m \left(\int_R \omega_i^{1-p_i'} dx \right)^{\frac{1}{p_i}}.$$
Therefore, $(\vec{\omega},\nu)$ satisfies condition (\ref{weight condition}).\\
(\rm{II}). Next, let us prove (\ref{weight condition}) implies (\ref{strong inequality}).

\noindent\textbf{$\bullet$ Estimate for $\mathcal{M}_{\mathcal{R},\alpha}^d$}.
We first prove the boundedness for the dyadic version,
$$
\mathcal{M}_{\mathcal{R},\alpha}^d(\vec{f})(x)=
\sup_{\substack{R \ni x \\ R \in \mathcal{DR}}}\prod_{i=1}^m
\frac{1}{|R|^{1-\frac{\alpha}{mn}}} \int_R |f_i(y_i)|dy_i, \ \ x\in \Rn.
$$
Without loss of generality, we can assume that $\vec{f} \geq 0$ , bounded and
has a compact support.
Therefore $\mathcal{M}_{\mathcal{R},\alpha}^d(\vec{f})(x)< \infty$ for all
$x\in \mathbb{R}^n$. According to the definition of
$\mathcal{M}_{\mathcal{R},\alpha}^d(\vec{f})(x)$, for any $x\in \Rn$,
there exists a dyadic rectangle $R$ such that $x\in R$ and
\begin{equation}\label{equation 3.3}
\mathcal{M}_{\mathcal{R},\alpha}^d(\vec{f})(x) \leq 2 \prod_{i=1}^m \frac{1}{|R|^{1-\frac{\alpha}{mn}}} \int_R f_i(y_i)dy_i.
\end{equation}
For any dyadic rectangle $R$, define the set $E(R)$ by
$$
E(R):=\{x\in \mathbb{R}^n; x\in R \ \text{and} \ R \
\text{is minimal for which} \ (\ref{equation 3.3}) \ \text{holds} \} .
$$
From the definition of maximal operator and the latter inequality it is obvious that
$$
\mathbb{R}^n=\bigcup_{R\in \mathcal{DR}} E(R).
$$
Since $ (\vec{\omega},\nu)$ satisfies the condition (\ref{weight condition}), it follows that
\begin{align*}
&\int_\Rn \left(\mathcal{M}_{\mathcal{R},\alpha}^d(\vec{f})(x)\right)^q \nu dx\\
&\leq \sum_{R\in \mathcal{DR}} \int_{E(R)}\left(\mathcal{M}_{\mathcal{R},\alpha}^d(\vec{f})(x) \right)^q \nu dx\\
&\lesssim \sum_{R\in \mathcal{DR}} \int_R \left(\prod_{i=1}^m \frac{1}{|R|^{1-\frac{\alpha}{mn}}} \int_R f_i(y_i)dy_i \right)^q \nu dx\\
&=\sum_{R\in \mathcal{DR}} \left( \prod_{i=1}^m\left(\int_R \omega_i^{1-p_i'}dx \right)^{-q/{p_i'}} \left(\int_R f_i(y_i)dy_i \right)^q\right)\\
&\quad\quad\quad \times \left( |R|^{\alpha/n+1/q-1/p} \left(\frac{1}{|R|} \int_R \nu dx \right)^{1/q} \prod_{i=1}^m \left(\frac{1}{|R|} \int_R \omega_i^{1-p_i'}dx \right)^{1/{p_i'}}  \right)^q\\
&\lesssim \sum_{R\in \mathcal{DR}}  \prod_{i=1}^m \left(\int_R \omega_i^{1-p_i'}dx \right)^{-q/{p_i'}} \left(\int_R f_i(y_i)dy_i \right)^q .
\end{align*}
By H\"{o}lder's inequality $\sum_{j=1}^\infty \prod_{i=1}^m |a_{ij}|\leq \prod_{i=1}^m (\sum_{j=1}^\infty |a_{ij}|^{p_i/p})^{p/p_i}$ and Lemma \ref{Carleson embedding theorem}, we further deduce that
\begin{align*}
&\int_\Rn \left(\mathcal{M}_{\mathcal{R},\alpha}^d(\vec{f})(x)\right)^q \nu dx\\
&\leq  \prod_{i=1}^m  \bigg[\sum_{R\in \mathcal{DR}}  \left(\int_R \omega_i^{1-p_i'}dx \right)^{(-qp_i)/{(p_i'p)}} \left(\int_R f_i(y_i)dy_i \right)^{qp_i/p}\bigg]^{p/p_i}\\
&\lesssim \prod_{i = 1}^m{\big\|f_i\big\|}_{L^{p_i}({\omega_i})}^q.
\end{align*}

\noindent\textbf{$\bullet$ Estimate for $\mathcal{M}_{\mathcal{R},\alpha}$}.
From Lemma \ref{lemma truncated dyadic strong maximal operator} and generalized Minkowski's inequality, it follows that
\begin{align*}
\big\|\mathcal{M}_{\mathcal{R},\alpha}^{(k)} (\vec{f})\big\|_{L^q(\nu)}
&\lesssim \frac{1}{|B_k|} \big\|\int_{B_k} \tau_{-t} \circ \mathcal{M}_{\mathcal{R},\alpha}^d \circ \vec{\tau}_t (\vec{f}) dt\big\|_{L^q(\nu)}  \\
&\leq \frac{1}{|B_k|} \int_{B_k} \big\| \tau_{-t} \circ \mathcal{M}_{\mathcal{R},\alpha}^d \circ \vec{\tau}_t (\vec{f}) \big\|_{L^q(\nu)} dt \\
&\leq \frac{1}{|B_k|} \int_{B_k} \big\| \mathcal{M}_{\mathcal{R},\alpha}^d \vec{\tau}_t (\vec{f}) \big\|_{L^q(\tau_t \nu)} dt.
\end{align*}
Since $(\vec{\omega},{\nu})$ satisfies the condition (\ref{weight condition}),
we can further verify $(\vec{\tau}_t \vec{\omega},{\tau_t \nu})$ also satisfies
  the condition (\ref{weight condition}) independently of $t$. Therefore, from (\ref{strong inequality}) we deduce that
\begin{align*}
&\big\|\mathcal{M}_{\mathcal{R},\alpha}^{(k)} (\vec{f})\big\|_{L^q(\nu)}
\lesssim \frac{1}{|B_k|} \int_{B_k} \prod_{i=1}^m \big\|  \tau_t f_i \big\|_{L^{p_i}(\tau_t \omega_i)} dt \\
&=\frac{1}{|B_k|} \int_{B_k} \prod_{i=1}^m \big\| f_i \big\|_{L^{p_i}(\omega_i)} dt
=\prod_{i=1}^m \big\| f_i \big\|_{L^{p_i}(\omega_i)}.
\end{align*}
Finally, letting $k$ tend to infinity, we finish the proof.
\end{proof}


\section{Proof of Theorem 2.2}\label{sec2a}

To complete the proof of Theorem \ref{one-weighted estimates}, we need the following characterizations of
$A_{({\vec{p}},q),\mathcal{R}}$ class and the the connection between the weights $ A_{p,\mathcal{R}}^d$ and the dyadic reverse doubling condition.

\begin{proposition}[\textbf{Characterization of $A_{({\vec{p}},q),\mathcal{R}}$ class} ]\label{characterization of weight class}

Let $0<\alpha<mn$, $1<p_1,\cdots,p_m<\infty$,
$\frac{1}{p}=\frac{1}{p_{1}}+\cdots+\frac{1}{p_{m}}$ and
$\frac{1}{q}=\frac{1}{p}-\frac{\alpha}{n}$.
Suppose $\vec{\omega}\in A_{({\vec{p}},q),\mathcal{R}}$, then\\
\begin{enumerate}
\item [(a)]${\nu_{\vec{\omega} }}^q  \in A_{r,\mathcal{R}} \subset A_{mq,\mathcal{R}}$;\\
\item [(b)]$\omega_i^{-p_i'} \in A_{mp_i',\mathcal{R}}$;\\
\item [(c)]$\omega_i^{-p_i'} \in A_{r_i,\mathcal{R}}$,
if $\frac{\alpha}{n}<(m-2)+\frac{1}{p_i}+\frac{1}{p_j}$,
for any $1\leq i,j \leq m,$\\
where $r=1+q(m-\frac{1}{p})$ and $r_i=1+\frac{p_i'}{q}[1+(m-1)q-\frac{q}{p}+\frac{q}{p_i}]$.
\end{enumerate}
\end{proposition}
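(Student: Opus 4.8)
The plan is to reduce all three inclusions to the single $A_{(\vec p,q),\mathcal{R}}$ bound by exploiting that the Muckenhoupt testing quantity of the target class always factors, after one application of H\"older's inequality, into the averages $\frac{1}{|R|}\int_R\nu_{\vec{\omega}}^q$ and $\frac{1}{|R|}\int_R\omega_i^{-p_i'}$ that appear in the definition of $A_{(\vec p,q),\mathcal{R}}$. Concretely, for a weight $w$ and an exponent $s$ the class $A_{s,\mathcal{R}}$ is tested by $\big(\frac{1}{|R|}\int_R w\big)\big(\frac{1}{|R|}\int_R w^{1-s'}\big)^{s-1}$; in each part the first factor is immediate and the whole game is to choose H\"older exponents that turn the second factor into the correct product. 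A key elementary identity I will use throughout is $\sum_{i=1}^m 1/p_i'=m-1/p$.

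For (a) I take $w=\nu_{\vec{\omega}}^q=\prod_i\omega_i^q$. Since $1-r'=-1/(r-1)=-1/\big(q(m-1/p)\big)$, one gets $w^{1-r'}=\nu_{\vec{\omega}}^{-1/(m-1/p)}=\prod_i\omega_i^{-1/(m-1/p)}$. I then apply H\"older with exponents $s_i=p_i'(m-1/p)$, admissible precisely because $\sum_i 1/s_i=\frac{1}{m-1/p}\sum_i 1/p_i'=1$; this bounds $\frac{1}{|R|}\int_R w^{1-r'}$ by $\prod_i\big(\frac{1}{|R|}\int_R\omega_i^{-p_i'}\big)^{1/s_i}$. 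Raising to the power $r-1=q(m-1/p)$ converts the $i$-th exponent into $q/p_i'$, and multiplying by $\frac{1}{|R|}\int_R\nu_{\vec{\omega}}^q$ reproduces exactly the $q$-th power of the $A_{(\vec p,q),\mathcal{R}}$ quantity, which is finite. The inclusion $A_{r,\mathcal{R}}\subset A_{mq,\mathcal{R}}$ then follows from monotonicity of the classes in the exponent together with $mq-r=q/p-1=q\alpha/n>0$.

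For (b) and (c) I write $\sigma_i=\omega_i^{-p_i'}$ and use $\omega_i=\nu_{\vec{\omega}}\prod_{j\neq i}\omega_j^{-1}$. With $s=mp_i'$ one has $1-s'=-1/(mp_i'-1)$, so $\sigma_i^{1-s'}=\omega_i^{p_i'/(mp_i'-1)}=\nu_{\vec{\omega}}^{p_i'/(mp_i'-1)}\prod_{j\neq i}\omega_j^{-p_i'/(mp_i'-1)}$. I split $\frac{1}{|R|}\int_R\sigma_i^{1-s'}$ by H\"older putting $t_0=q(mp_i'-1)/p_i'$ on the $\nu_{\vec{\omega}}$ factor and $t_j=p_j'(mp_i'-1)/p_i'$ on each $\omega_j$ factor; a short computation gives $\sum_k 1/t_k=1-\frac{p_i'\,\alpha/n}{mp_i'-1}\le 1$, so the split is legitimate after absorbing the deficit into a trivial factor $1$. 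Raising the outcome to the power $s-1=mp_i'-1$ turns the $\nu_{\vec{\omega}}$ exponent into $p_i'/q$ and the $\omega_j$ exponents into $p_i'/p_j'$; the leftover factor $\frac{1}{|R|}\int_R\omega_i^{-p_i'}$ completes the product to $\prod_{j=1}^m\big(\frac{1}{|R|}\int_R\omega_j^{-p_j'}\big)^{p_i'/p_j'}$, and the $A_{mp_i',\mathcal{R}}$ quantity collapses to the $p_i'$-th power of the $A_{(\vec p,q),\mathcal{R}}$ quantity, proving (b).

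Part (c) is the same computation with the sharper exponent $r_i=p_i'(m-\alpha/n)$, engineered so that the H\"older exponents sum to exactly $1$ and the trivial factor of (b) disappears. Writing $\gamma_i=r_i-1=p_i'\big(1/q+\sum_{j\neq i}1/p_j'\big)$ one gets $\sigma_i^{1-r_i'}=\omega_i^{p_i'/\gamma_i}$, and the exponents $t_0=q\gamma_i/p_i'$, $t_j=p_j'\gamma_i/p_i'$ now satisfy $\sum_k 1/t_k=1$ identically. The place where the hypothesis enters is the admissibility requirement $t_j\ge 1$ for each $j\neq i$: since $t_j\ge 1\iff\gamma_i\ge p_i'/p_j'\iff\alpha/n\le m-1/p_i'-1/p_j'=(m-2)+1/p_i+1/p_j$, this is exactly the stated condition (and it simultaneously guarantees $r_i>1$, so that $A_{r_i,\mathcal{R}}$ is a genuine Muckenhoupt class). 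The bound is then identified with the $p_i'$-th power of the $A_{(\vec p,q),\mathcal{R}}$ quantity as before. I expect the only real difficulty to be this exponent bookkeeping --- verifying in each case that the H\"older weights are nonnegative and sum to at most one, and pinpointing in (c) that the hypothesis is precisely the admissibility condition $t_j\ge1$ --- rather than any genuinely analytic step.
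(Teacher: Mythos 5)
Your proof is correct: all the exponent identities check out (in (a), $(r-1)/s_i=q/p_i'$ and $mq-r=q\alpha/n>0$; in (b), $\sum_k 1/t_k=1-\frac{p_i'\,\alpha/n}{mp_i'-1}\le 1$ with the deficit absorbed into a factor of $1$; in (c), $r_i=p_i'(m-\alpha/n)$, the exponents sum to exactly $1$, and $t_j\ge 1$ is precisely the stated hypothesis, with $t_0\ge1$ automatic since $\sum_{j\neq i}1/p_j<m-1$). This is essentially the paper's own approach: the paper omits the details, remarking only that the proof of Theorem 2.2 in \cite{CX} uses nothing beyond H\"older's inequality and no geometry of cubes or rectangles, and your exponent bookkeeping is exactly that argument carried out for rectangles.
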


\begin{proof}
By examining the proof of Theorem 2.2 in \cite{CX}, we shall find the arguments used in \cite{CX} rely only on the use of H\"{o}lder's inequality, and it doesn't involve any geometric property of cubes or rectangles.
Hence we can also adopt the method in \cite{CX} to complete our proof.
Since the main ideas are almost the same, we omit the proof here.
\end{proof}

\begin{proposition}\label{pro 4.2}
Let $1<p<\infty$, $\omega$ is a nonnegative weight. Then we have\\
\begin{enumerate}
\item [(i)]$A_{\infty,\mathcal{R}} \subset A_{\infty,\mathcal{R}}^d \subset RD^{(d)} $;\\
\item [(ii)] if $\omega \in A_{p,\mathcal{R}}^d$, then $\omega^{1-p'} \in RD^{(d)}$.
\end{enumerate}
\end{proposition}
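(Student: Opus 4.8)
Proposition \ref{pro 4.2} asserts two facts about reverse doubling. The plan is to recall the definitions of the Muckenhoupt-type classes $A_{\infty,\mathcal{R}}$ and $A_{p,\mathcal{R}}$ adapted to rectangles, observe that all the relevant inequalities are stated over rectangles and so transfer verbatim to the dyadic subfamily $\mathcal{DR}$, and then extract the reverse doubling inequality from the $A_\infty$ property by a direct estimate on a parent-child pair of dyadic rectangles.

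For part (i), the inclusion $A_{\infty,\mathcal{R}} \subset A_{\infty,\mathcal{R}}^d$ is immediate, since the defining supremum for the dyadic class is taken over a subfamily $\mathcal{DR}\subset\mathcal{R}$, so the dyadic constant is dominated by the full one. The substantive inclusion is $A_{\infty,\mathcal{R}}^d \subset RD^{(d)}$. First I would recall that membership in $A_{\infty,\mathcal{R}}^d$ supplies a quantitative comparison between Lebesgue measure and $\omega$-measure on dyadic rectangles: there exist constants so that if $E\subset J$ with $J\in\mathcal{DR}$, then $\omega(E)/\omega(J)$ and $|E|/|J|$ control each other in the standard $A_\infty$ fashion. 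Now take $I,J\in\mathcal{DR}$ with $I\subset J$ and $|I| = \frac{1}{2^n}|J|$. Then $J\setminus I$ is a union of $2^n-1$ sibling dyadic rectangles of the same volume as $I$, so $|J\setminus I| = (1-2^{-n})|J|$ is a fixed proportion of $|J|$. The $A_\infty$ comparison then forces $\omega(J\setminus I)\geq c\,\omega(J)$ for some $c>0$ depending only on the $A_\infty$ constant and on $n$, whence $\omega(I)=\omega(J)-\omega(J\setminus I)\leq(1-c)\omega(J)$. Setting $d=(1-c)^{-1}>1$ gives exactly $d\int_I\omega\le\int_J\omega$, which is the defining inequality of $RD^{(d)}$.

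For part (ii), the key structural fact is the self-duality of the Muckenhoupt condition: if $\omega\in A_{p,\mathcal{R}}^d$ then its dual weight $\sigma:=\omega^{1-p'}$ lies in $A_{p',\mathcal{R}}^d$, since the $A_{p,\mathcal{R}}^d$ condition on $\omega$ is symmetric under swapping the roles of $\omega$ and $\omega^{1-p'}$ together with $p\leftrightarrow p'$; this is the exact rectangular-dyadic analogue of the classical duality $\omega\in A_p\iff\omega^{1-p'}\in A_{p'}$, and it uses only Hölder over rectangles with no geometric input. In particular $\sigma\in A_{p',\mathcal{R}}^d\subset A_{\infty,\mathcal{R}}^d$, and applying part (i) yields $\sigma=\omega^{1-p'}\in RD^{(d)}$, as claimed.

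The main obstacle is the $A_\infty^d\subset RD^{(d)}$ step in part (i): it requires choosing the correct quantitative formulation of the $A_{\infty,\mathcal{R}}^d$ condition (e.g. the $|E|/|J|$ versus $\omega(E)/\omega(J)$ comparison) and checking that the constant $c$ extracted from it is strictly positive and depends only on admissible data, so that $d>1$ genuinely holds; the complement $J\setminus I$ being a union of siblings of definite relative volume is what makes this work cleanly for dyadic rectangles. The duality in part (ii) is then routine once the rectangular $A_p^d$ condition is written out, so I expect no difficulty beyond bookkeeping there.
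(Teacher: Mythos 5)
Your proposal is correct, but it runs the proposition in the opposite direction from the paper, and the two direct arguments are different. The paper proves (ii) first and directly: writing $u_A$ and $\nu_A$ for the averages of $\omega$ and $\omega^{1-p'}$ over $A$, it uses the sandwich $1\le u_I^{1/p}\nu_I^{1/p'}\le K$ (H\"older plus the $A_{p,\mathcal{R}}^d$ condition) on a dyadic rectangle $I$ and its $2^n$ bisection children $I_i$, together with $u_{I_i}\le 2^n u_I$ and the averaging identity $\nu_I=2^{-n}\sum_i\nu_{I_i}$, to obtain the lower bound $\nu_{I_i}\ge \nu_I/(2^{np'/p}K^{p'})$ for every child; subtracting the other children then yields the explicit reverse doubling constant $1/d=1-(1-2^{-n})/(2^{np'/p}K^{p'})$. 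The substantive half of (i), namely $A_{p,\mathcal{R}}^d\subset RD^{(d)}$, is then deduced from (ii) by exactly the duality you use, $\omega^{-1/(p-1)}=\omega^{1-p'}\in A_{p',\mathcal{R}}^d$. You instead prove (i) directly and get (ii) by duality; both routes work, and each buys something. Yours is shorter and uses only the volume ratio $|I|=2^{-n}|J|$, so it covers arbitrary dyadic pairs with that ratio and not only bisection children (note your sibling-decomposition remark is both inessential and false for general such pairs; all you need is $|J\setminus I|=(1-2^{-n})|J|$). The paper's computation is fully self-contained and produces an explicit $d$ in terms of $K$, $n$, $p$, which matters because the paper never formalizes $A_{\infty,\mathcal{R}}^d$ beyond the implicit definition $\bigcup_{p<\infty}A_{p,\mathcal{R}}^d$. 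For that same reason, your ``quantitative $A_\infty$ comparison'' must be anchored to this union definition: fix $p$ with $\omega\in A_{p,\mathcal{R}}^d$ and derive, by H\"older on the dyadic rectangle $J$, the one-sided estimate $|E|/|J|\le K\bigl(\omega(E)/\omega(J)\bigr)^{1/p}$ for measurable $E\subset J$, which with $E=J\setminus I$ gives $\omega(I)\le(1-c)\omega(J)$ with $c=\bigl((1-2^{-n})/K\bigr)^{p}>0$. Be careful with your parenthetical claim that $|E|/|J|$ and $\omega(E)/\omega(J)$ ``control each other'': the converse direction would require a reverse H\"older inequality for dyadic-rectangle weights, which is not available off the shelf in this setting — fortunately your argument never uses that direction, so the proof stands once stated in the one-sided form.
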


\noindent\textbf{Proof of Theorem 2.2}
\begin{proof}
From Proposition \ref{characterization of weight class} \rm{(b)} and Proposition \ref{pro 4.2} \rm{(i)}, we can get the fact,
if $\vec{\omega}\in A_{({\vec{p}},q),\mathcal{R}}$, then $\omega_i^{-p_i'} \ (i=1,\cdots,m)$
satisfies the dyadic reverse doubling condition.

Now, by means of the above arguments and using $\omega_i^{p_i},\nu_{\vec{\omega}}$ substitute $\omega_i,\nu$ respectively in Theorem \ref{two-weighted estimates}, we can conclude the results of Theorem \ref{one-weighted estimates}.
\end{proof}

We next turn to the proof of Proposition \ref{pro 4.2}.

\vspace{0.2cm}
\noindent\textbf{Proof of Proposition 4.2}
\begin{proof}
According to the definition of these weights, it is easy to see the inclusion
$A_{\infty,\mathcal{R}} \subset A_{\infty,\mathcal{R}}^d $. Now let us prove
{\rm(ii)}.

Let $I$ be any dyadic rectangle.
By dividing $I$ into $2^n$ equal parts, we can get dyadic sub-rectangles of $I$, $I_1,\cdots, I_{2^n} $.
We denote $$ u_A=\frac{1}{|A|} \int_A \omega(x) dx,\ \ \nu_A=\frac{1}{|A|} \int_A \omega(x)^{1-p'} dx,$$
for a measurable set $A \subset I$ and $|A|\neq 0$.
Then we have $$ u_A=\frac{1}{2^n}(u_{I_1}+\cdots+u_{I_{2^n}}),\ \ \nu_A=\frac{1}{2^n}(\nu_{I_1}+\cdots+\nu_{I_{2^n}}).$$
Notice that $u_{I_i}\leq 2^n u_I$, for each $i=1,\cdots,n$.
Since $\omega \in A_{p,\mathcal{R}}^d $, for each $i=1,\cdots,n$, we have
$$1 \leq u_I^{\frac1p} \nu_I^{\frac1{p'}} \leq K,\ \ 1 \leq u_{I_i}^{\frac1p} \nu_{I_i}^{\frac{1}{p'}} \leq K,$$
where $K$ is a positive constant which does not depend on $I$.
Hence we obtain the following inequalities, for all $i=1,\cdots,n$,
$$
\nu_{I_i} \geq \frac{1}{u_{I_i}^{p'/p}} \geq \frac{1}{(2^n u_I)^{p'/p}}
\geq \frac{\nu_I}{2^{np'/p}\cdot K^{p'}} ,
$$
$$
\nu_{I_i}=2^n \nu_I-\sum_{j\neq i}\nu_{I_j}
\leq \left(2^n-\frac{2^n-1}{2^{np'/p}\cdot K^{p'}} \right)\nu_I
=2^n\left(1-\frac{1-2^{-n}}{2^{np'/p}\cdot K^{p'}} \right)\nu_I.
$$
We take $d$ such that $\frac{1}{d}=1-\frac{1-2^{-n}}{2^{np'/p}\cdot K^{p'}}$.
Because $K\geq1$, $p>1$, we have $d>1$.
Finally, we conclude
$$
d \int_{I_i}\omega(x)^{1-p'}dx=d |I_i|\nu_{I_i}
\leq 2^n |I_i| \nu_I=|I|\nu_I=\int_I\omega(x)^{1-p'}dx.
$$
Hence $\omega^{1-p'} \in RD^{(d)}$. This proves (ii).

Next, let $\omega\in A_{p,\mathcal R}^d$. By the definition of $A_{p,\mathcal R}^d$, it can be easily seen that
$\omega^{-\frac{1}{p-1}}\in A_{p',\mathcal R}^d$. Hence we have by \rm{(ii)}
\begin{equation*}
\omega=\bigl(\omega^{-\frac{1}{p-1}}\bigr)^{-\frac{1}{p'-1}}\in RD^{(d)}.
\end{equation*}
This completes the proof of Proposition \ref{pro 4.2}.
\end{proof}

\begin{corollary}\label{cor 4.3}
Let $1<p<q<\infty$, $\omega \in A_{p,\mathcal{R}}^d $.
Let $\{\mu_I\}_{I\in \mathcal{DR}} $ be nonnegative numbers.
Then the following two statements are equivalent:
\begin{enumerate}
\item [(i)] There is a positive constant $C_1$ such that
$$
\sum_{I\in \mathcal{DR}} \mu_I \left(\frac{1}{|I|}\int_I f(x)dx \right)^q
\leq C_1 \left(\int_\Rn f(x)^p \omega(x)dx \right)^{q/p}
$$
for all nonnegative locally integrable function $f$.
\item [(ii)]  There is a positive constant $C_2$ such that
$$
\mu_I \leq C_2 \left(\int_I \omega(x)dx \right)^{q/{p}}
$$
for all $I\in \mathcal{DR}$.
\end{enumerate}
\end{corollary}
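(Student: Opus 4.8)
The plan is to prove the two implications separately, reducing everything in both directions to the dyadic $A_{p,\mathcal R}^d$ condition together with the Carleson embedding theorem of Lemma \ref{Carleson embedding theorem}. The implication (i)$\Rightarrow$(ii) is the elementary direction: I would test the inequality in (i) against a single dyadic rectangle and an extremal function. The implication (ii)$\Rightarrow$(i) is where the $A_{p,\mathcal R}^d$ hypothesis and Lemma \ref{Carleson embedding theorem} do the real work.

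For (i)$\Rightarrow$(ii), fix $I\in\mathcal{DR}$ and take $f=\omega^{1-p'}\chi_I$. Keeping only the single term indexed by $I$ on the left-hand side of (i) and using the arithmetic identity $(1-p')p+1=1-p'$, the right-hand side collapses to $\big(\int_I\omega^{1-p'}dx\big)^{q/p}$, so that $\mu_I\big(\frac{1}{|I|}\int_I\omega^{1-p'}dx\big)^q\le C_1\big(\int_I\omega^{1-p'}dx\big)^{q/p}$, i.e. $\mu_I\le C_1|I|^q\big(\int_I\omega^{1-p'}dx\big)^{-q/p'}$ after noting $q/p-q=-q/p'$. Finally I would invoke H\"older's inequality in the form $|I|^p\le\big(\int_I\omega\,dx\big)\big(\int_I\omega^{1-p'}dx\big)^{p-1}$ (which needs no $A_p$ information) to replace $\big(\int_I\omega^{1-p'}dx\big)^{-q/p'}$ by $|I|^{-q}\big(\int_I\omega\,dx\big)^{q/p}$ and arrive at (ii) with $C_2=C_1$.

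For (ii)$\Rightarrow$(i), the key step is to recognise that the $A_{p,\mathcal R}^d$ condition converts the coefficient in (ii) into exactly the coefficient appearing in Lemma \ref{Carleson embedding theorem}. Since $\omega\in A_{p,\mathcal R}^d$, one has $\big(\int_I\omega\,dx\big)\big(\int_I\omega^{1-p'}dx\big)^{p-1}\le C|I|^p$ for every $I\in\mathcal{DR}$; raising this to the power $q/p$ and using $-(p-1)q/p=-q/p'$ gives $\big(\int_I\omega\,dx\big)^{q/p}\le C|I|^q\big(\int_I\omega^{1-p'}dx\big)^{-q/p'}$. Combining this with the hypothesis $\mu_I\le C_2\big(\int_I\omega\,dx\big)^{q/p}$, after the factors $|I|^q$ cancel, I would estimate
$$\sum_{I\in\mathcal{DR}}\mu_I\Big(\frac{1}{|I|}\int_I f\,dx\Big)^q\le C\sum_{I\in\mathcal{DR}}\Big(\int_I\omega^{1-p'}dx\Big)^{-q/p'}\Big(\int_I f\,dx\Big)^q.$$
The remaining point, and the one requiring the structural hypothesis, is that Lemma \ref{Carleson embedding theorem} applies only when $\omega^{1-p'}$ satisfies the dyadic reverse doubling condition; this is guaranteed precisely by Proposition \ref{pro 4.2}(ii), which gives $\omega^{1-p'}\in RD^{(d)}$ for $\omega\in A_{p,\mathcal R}^d$. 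Applying Lemma \ref{Carleson embedding theorem} then bounds the right-hand side by $C\big(\int_{\Rn}f^p\omega\,dx\big)^{q/p}$, yielding (i).

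The main obstacle is not any single hard estimate but rather identifying the correct substitution: one must notice that the $A_{p,\mathcal R}^d$ condition is exactly what is needed to pass from the general Carleson coefficient $\big(\int_I\omega\,dx\big)^{q/p}$ to the special coefficient $\big(\int_I\omega^{1-p'}dx\big)^{-q/p'}$ handled by Lemma \ref{Carleson embedding theorem}, and that the hypotheses of that lemma are met only because of Proposition \ref{pro 4.2}(ii). Once these two facts are in place, both directions follow from elementary exponent bookkeeping and H\"older's inequality.
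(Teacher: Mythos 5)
Your proof is correct and follows essentially the same route as the paper: for (ii)$\Rightarrow$(i) you combine the $A_{p,\mathcal{R}}^d$ condition with Proposition \ref{pro 4.2}(ii) and Lemma \ref{Carleson embedding theorem} exactly as the paper does, merely writing out the exponent bookkeeping that the paper leaves implicit. The only (harmless) deviation is in the elementary direction, where the paper simply tests (i) with $f=\chi_I$ to obtain $\mu_I\le C_1\bigl(\int_I\omega\,dx\bigr)^{q/p}$ at once, whereas your choice $f=\omega^{1-p'}\chi_I$ reaches the same conclusion after an extra application of H\"older's inequality.
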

\begin{proof}
Taking $f=\chi_I$, we can check that \rm{(i)} implies \rm{(ii)}. Conversely,
from $\omega \in A_{p,\mathcal{R}}^d $ and H\"{o}lder's inequality, we get
\begin{align*}
1=\frac{1}{|I|}\int_I \omega(x)^{\frac1p}\omega(x)^{-\frac1p} dx
\leq \left(\frac{1}{|I|}\int_I \omega(x) dx\right)^{\frac1p}
\left(\frac{1}{|I|}\int_I \omega(x)^{-\frac{1}{p-1}}dx\right)^{\frac{1}{p'}}
\leq C
\end{align*}
for all $I\in \mathcal{DR}$,
where the constant $C$ independent of $I$.
By combining Lemma \ref{Carleson embedding theorem} and Proposition
\ref{pro 4.2}
with the above inequality, we can easily see that \rm{(ii)} implies \rm{(i)}.
\end{proof}


\section{A new proof of multilinear fractional integral operators and maximal operators}

Our goal here is to present a new proof for Theorem A.

In fact, by verifying the proof of Theorem \ref{one-weighted estimates}, we will find that the method  we use is
likewise appropriate for the multilinear fractional maximal operator $\mathcal{M}_\alpha$ in Definition \ref{def 1.2}.
That is to say, the conclusions of Theorem \ref{one-weighted estimates} also hold for $\mathcal{M}_\alpha$ and $ A_{(\vec{p},q)}$,
$\mathcal{M}_\alpha^d$ and $ A_{(\vec{p},q)}^d$.
To complete the weighted estimate for $\mathcal{I}_\alpha$, we only need the following proposition.
\begin{proposition}\label{pro 5.1}
Let $0<q<\infty$, and $0<\alpha<mn$. If $\omega \in A_\infty$,
then there exists a positive constant $C$ independent of $f$ such that
\begin{equation}\label{strong estimate}
\int_{\mathbb{R}^n}|\mathcal{I}_\alpha(\vec{f})(x)|^q\omega(x)dx \leq C
\int_\Rn [\mathcal{M}_\alpha(\vec{f})(x)]^q\omega(x)dx
\end{equation}
and
\begin{equation}\label{weak estimate}
\sup_{\lambda>0} \lambda^q\omega(x\in \mathbb{R}^n;|\mathcal{I}_\alpha(\vec{f})(x)|>\lambda) \leq C
\sup_{\lambda>0} \lambda^q \omega(\{x\in \Rn;\mathcal{M}_\alpha(\vec{f})(x)>\lambda\}).
\end{equation}
\end{proposition}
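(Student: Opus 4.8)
The plan is to derive both inequalities from a single good-$\lambda$ inequality comparing $\mathcal{I}_\alpha(\vec{f})$ with $\mathcal{M}_\alpha(\vec{f})$, and then to integrate it. After the standard reductions (take $\vec{f}\ge 0$, which makes $\mathcal{I}_\alpha(\vec{f})$ lower semicontinuous, and a truncation of the kernel guaranteeing the left-hand sides are a priori finite), it suffices to establish that for some $\varepsilon>0$ and all small $\gamma>0$,
\[
\omega\big(\{\mathcal{I}_\alpha(\vec{f})>2\lambda,\ \mathcal{M}_\alpha(\vec{f})\le\gamma\lambda\}\big)\le C\,\gamma^{\varepsilon}\,\omega\big(\{\mathcal{I}_\alpha(\vec{f})>\lambda\}\big),\qquad \lambda>0.
\]
To prove this I would fix $\lambda$, set $\Omega_\lambda=\{\mathcal{I}_\alpha(\vec{f})>\lambda\}$ (open), and take a Whitney decomposition $\Omega_\lambda=\bigcup_j Q_j$ so that each $Q_j$ has a nearby point $\bar{x}_j\notin\Omega_\lambda$ with $|x-\bar{x}_j|\lesssim\ell(Q_j)$ for $x\in Q_j$ and $\mathcal{I}_\alpha(\vec{f})(\bar{x}_j)\le\lambda$. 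The decisive choice is to split the $m$-fold integral defining $\mathcal{I}_\alpha(\vec f)$ \emph{according to the diagonal} rather than factor by factor: writing $\mathcal{I}_\alpha(\vec f)=\mathcal{I}_\alpha^{\mathrm{loc}}+\mathcal{I}_\alpha^{\mathrm{glob}}$, where $\mathcal{I}_\alpha^{\mathrm{loc}}(x)=\mathcal{I}_\alpha(f_1\chi_{cQ_j},\dots,f_m\chi_{cQ_j})(x)$ is the integral over the region in which every argument $z_i=x-y_i$ lies in $cQ_j$, and $\mathcal{I}_\alpha^{\mathrm{glob}}$ is the integral over the complement (at least one $z_i\notin cQ_j$), with $c$ a large fixed constant.

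For the global part, since at least one $z_i$ lies outside $cQ_j$, the diagonal distance satisfies $|(x-z_1,\dots,x-z_m)|\gtrsim c\,\ell(Q_j)\gg|x-\bar{x}_j|$, so the mean value theorem applied to the kernel $|(y_1,\dots,y_m)|^{-(mn-\alpha)}$ produces a gain of $|x-\bar{x}_j|/|(x-z_1,\dots,x-z_m)|$. Decomposing dyadically in $|(x-z_1,\dots,x-z_m)|\sim 2^k\ell(Q_j)$ and using the elementary bound $\prod_i\int_{B(x,2^k\ell(Q_j))}f_i\le |B(x,2^k\ell(Q_j))|^{m-\alpha/n}\,\mathcal{M}_\alpha(\vec f)(x)$, the extra factor $2^{-k}$ makes the series converge and yields $|\mathcal{I}_\alpha^{\mathrm{glob}}(x)-\mathcal{I}_\alpha^{\mathrm{glob}}(\bar{x}_j)|\lesssim\mathcal{M}_\alpha(\vec f)(x)$. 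Since $\mathcal{I}_\alpha^{\mathrm{glob}}(\bar{x}_j)\le\mathcal{I}_\alpha(\vec f)(\bar{x}_j)\le\lambda$, this gives $\mathcal{I}_\alpha^{\mathrm{glob}}(x)\le(1+C\gamma)\lambda$ on $\{\mathcal{M}_\alpha(\vec f)\le\gamma\lambda\}$. Hence, on $Q_j$, the event $\{\mathcal{I}_\alpha(\vec f)>2\lambda,\ \mathcal{M}_\alpha(\vec f)\le\gamma\lambda\}$ forces $\mathcal{I}_\alpha^{\mathrm{loc}}(x)>\lambda/2$ once $\gamma$ is small.

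For the local part I would invoke the weak-type endpoint $\mathcal{I}_\alpha:L^1\times\cdots\times L^1\to L^{q_0,\infty}$ with $q_0=n/(mn-\alpha)$ (Kenig--Stein, \cite{Kenig}): the measure of $\{x\in Q_j:\mathcal{I}_\alpha^{\mathrm{loc}}(x)>\lambda/2\}$ is $\lesssim\big(\lambda^{-1}\prod_i\int_{cQ_j}f_i\big)^{q_0}$, and since any $x$ in the bad set obeys $\prod_i\int_{cQ_j}f_i\le|cQ_j|^{m-\alpha/n}\,\mathcal{M}_\alpha(\vec f)(x)\le|cQ_j|^{m-\alpha/n}\gamma\lambda$, the bookkeeping $(m-\alpha/n)q_0=1$ gives $|\{x\in Q_j:\text{bad}\}|\lesssim\gamma^{q_0}|Q_j|$. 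The $A_\infty$ property of $\omega$ (namely $\omega(E)/\omega(Q)\lesssim(|E|/|Q|)^\delta$ for $E\subset Q$) then upgrades this to $\omega(\{x\in Q_j:\text{bad}\})\lesssim\gamma^{q_0\delta}\,\omega(Q_j)$, and summing over the pairwise essentially disjoint Whitney cubes yields the good-$\lambda$ inequality with $\varepsilon=q_0\delta$. Finally, multiplying by $\lambda^{q-1}$ and integrating gives $\int|\mathcal{I}_\alpha(\vec f)|^q\omega\lesssim \gamma^{\varepsilon}\int|\mathcal{I}_\alpha(\vec f)|^q\omega+\gamma^{-q}\int(\mathcal{M}_\alpha(\vec f))^q\omega$; choosing $\gamma$ small and absorbing the first term (legitimate by the a priori finiteness) proves \eqref{strong estimate}, while taking the supremum of $\lambda^q\omega(\{\mathcal{I}_\alpha(\vec f)>\lambda\})$ in the same distributional inequality and absorbing proves \eqref{weak estimate}. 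The main obstacle is precisely the organization of the decomposition: a naive factor-by-factor splitting produces mixed local/global terms that are genuinely \emph{not} dominated pointwise by $\mathcal{M}_\alpha$ (the relevant dyadic series diverges), and the diagonal splitting above is what repairs this, confining all the non-comparable mass into the single all-local term that the weak endpoint controls.
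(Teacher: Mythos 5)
Your proposal is correct, and at the level of strategy it is the same Muckenhoupt--Wheeden good-$\lambda$ scheme the paper uses: Whitney decomposition of $\{\mathcal{I}_\alpha(\vec{f})>\lambda\}$, a relative distributional estimate on each Whitney cube, the $A_\infty$ measure-to-weight comparison, then integration of the good-$\lambda$ inequality against $\lambda^{q-1}d\lambda$ (resp.\ taking suprema) to get \eqref{strong estimate} (resp.\ \eqref{weak estimate}). What you add is the proof of the key measure estimate: the paper isolates this as Lemma \ref{lemma 5.2}, states it with constants $B,K$ and the exponent $n/(mn-\alpha)$, and omits the proof, citing Lemma 1 of \cite{MW} and the endpoint bound of \cite{Kenig}. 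Your diagonal local/global splitting, the mean-value gain of $2^{-k}$ on the global part measured against the reference point $\bar{x}_j\notin\Omega_\lambda$, and the Kenig--Stein bound $\mathcal{I}_\alpha:L^1\times\cdots\times L^1\to L^{n/(mn-\alpha),\infty}$ with the bookkeeping $(m-\alpha/n)\cdot\frac{n}{mn-\alpha}=1$ constitute precisely the multilinear adaptation of \cite{MW} that the paper leaves to the reader, so on this point your write-up is more complete than the paper's. One quibble: the mixed terms of a factor-by-factor splitting are not irreparable --- the same differencing at $\bar{x}_j$ fixes each of them individually, since each nonnegative piece is bounded at $\bar{x}_j$ by $\mathcal{I}_\alpha(\vec{f})(\bar{x}_j)\le\lambda$ and the kernel is evaluated at diagonal distance $\gtrsim c\,\ell(Q_j)$ whenever at least one variable is global; grouping them into a single global term, as you do, is a convenience, not the only repair. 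Your claim is true only for direct pointwise domination by $\mathcal{M}_\alpha$ without differencing.

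The one step where you are glib and the paper is more careful is the absorption. Truncating the kernel does not by itself make $\int_{\Rn}\mathcal{I}_\alpha(\vec{f})^q\omega\,dx$ a priori finite: a bounded function need not lie in $L^q(\omega)$ when $\omega(\Rn)=\infty$, so you must truncate the far tail of the kernel as well (so that, for compactly supported $\vec{f}$, the truncated operator has compactly supported output) and then verify the good-$\lambda$ constants are uniform in the truncation --- with a sharp cutoff the mean-value step acquires a boundary-shell term needing a separate (easy) estimate. The paper sidesteps all of this: for $\vec{f}$ supported in a cube $Q$ it proves the pointwise bound $\mathcal{I}_\alpha(\vec{f})(x)\le C\,\mathcal{M}_\alpha(\vec{f})(x)$ for $x\notin 3Q$ (its inequality \eqref{equation 5.6}), so the term to be absorbed in \eqref{equation 5.7} is supported in $3Q$, and it integrates $\lambda^{q-1}$ only over $(0,N)$, making that term at most $N^q\omega(3Q)/q<\infty$ with no truncation at all; letting $N\to\infty$ finishes both the strong and weak cases. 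If you replace your truncation gloss by this device, your argument is complete as written.
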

\remark
When $m=1$, Theorem \rm{1} \cite{MW} is the linear result of Proposition \ref{pro 5.1}.
Furthermore, the inequality (\ref{strong estimate}) has been proved by Moen in Theorem \rm{3.1} \cite{Moen},
which used an extrapolation theorem. Our method is entirely different from his.

To prove Proposition \ref{pro 5.1}, we need the following lemma.
\begin{lemma}\label{lemma 5.2}
If $0<\alpha<mn$, there exist constants $B$ and $K$, depending only on $\alpha$, $m$ and $n$
such that if $\lambda>0$, $d>0$, $b\geq B$, $\vec{f}$ is nonnegative, $Q$ is a cube in $\Rn$ such that
$\mathcal{I}_\alpha(\vec{f})\leq \lambda$ at some point of $Q$ and
$E = \{ x \in Q;\mathcal{I}_\alpha(\vec{f})(x)
\geq \lambda b, \mathcal{M}_\alpha(\vec{f})(x) \leq \lambda d \}$,
then $|E|\leq K |Q| (d/b)^{n/(mn-\alpha)}$.
\end{lemma}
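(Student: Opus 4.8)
The plan is to prove Lemma \ref{lemma 5.2} by a Calder\'{o}n--Zygmund-type good-$\lambda$ argument, adapting the classical Muckenhoupt--Wheeden comparison between fractional integrals and fractional maximal functions (Theorem 1 of \cite{MW}) to the multilinear setting. The goal is to show that the set $E$ of points in $Q$ where $\mathcal{I}_\alpha(\vec f)$ is large (at least $\lambda b$) while $\mathcal{M}_\alpha(\vec f)$ is controlled (at most $\lambda d$) has small relative measure, with the gain coming from the ratio $d/b$. The crucial structural fact I would exploit is that the kernel $|(y_1,\dots,y_m)|^{-(mn-\alpha)}$ decays, so the contribution to $\mathcal{I}_\alpha(\vec f)(x)$ from the $f_i$ restricted far away from $Q$ varies slowly as $x$ ranges over $Q$, whereas the local contribution can be estimated in measure using weak-type bounds and the hypothesis $\mathcal{M}_\alpha(\vec f)\le\lambda d$ on $E$.

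First I would fix the cube $Q$ with side length $\ell$ and a point $x_0\in Q$ where $\mathcal{I}_\alpha(\vec f)(x_0)\le\lambda$. I would split each $f_i=f_i^0+f_i^\infty$, where $f_i^0=f_i\chi_{Q^*}$ and $f_i^\infty=f_i\chi_{(Q^*)^c}$, with $Q^*$ a fixed dilate of $Q$ (say $Q^*=3\sqrt{n}\,Q$). The multilinear integral $\mathcal{I}_\alpha(\vec f)$ then expands into $2^m$ terms indexed by choosing $f_i^0$ or $f_i^\infty$ in each slot. The purely local term $\mathcal{I}_\alpha(f_1^0,\dots,f_m^0)$ is handled by its weak-type behavior: it maps the product of $L^1$ spaces boundedly into $L^{mn/(mn-\alpha),\infty}$, and the $L^1$ norms $\int_{Q^*}f_i$ are dominated via $\mathcal{M}_\alpha(\vec f)(x_0)$ or directly by the hypothesis $\mathcal{M}_\alpha\le\lambda d$ on $E$, producing a measure estimate with the correct power $|Q|(d/b)^{n/(mn-\alpha)}$. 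Every term that contains at least one factor $f_i^\infty$ is, by the slow variation of the far kernel, comparable for $x\in Q$ to its value at $x_0$, hence bounded by a constant multiple of $\mathcal{I}_\alpha(\vec f)(x_0)\le\lambda$; choosing $B$ large enough so that $b\ge B$ forces $\lambda b$ to exceed the sum of all these mixed-term bounds, so on $E$ the local term alone must account for the largeness.

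The key quantitative step is the comparison across $Q$ for the mixed terms. For $x\in Q$ and any $y_i$ with $f_i^\infty(y_i)\ne0$ in at least one slot, the denominators $|(x-y_1,\dots,x-y_m)|^{mn-\alpha}$ and $|(x_0-y_1,\dots,x_0-y_m)|^{mn-\alpha}$ are comparable because $|x-x_0|\le\sqrt n\,\ell$ is small relative to the distance from $Q$ to $(Q^*)^c$; this gives a uniform constant $c_{m,n,\alpha}$ with $\mathcal{I}_\alpha(\text{mixed})(x)\le c_{m,n,\alpha}\,\mathcal{I}_\alpha(\vec f)(x_0)\le c_{m,n,\alpha}\lambda$. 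Summing over the finitely many mixed configurations and setting $B$ to exceed the resulting total constant yields, for $x\in E$,
\begin{equation*}
\mathcal{I}_\alpha(f_1^0,\dots,f_m^0)(x)\ge \lambda b-c_{m,n,\alpha}\lambda\ge \tfrac{1}{2}\lambda b.
\end{equation*}
Then the weak-type $(1,\dots,1)$ estimate for the local integral gives
\begin{equation*}
|E|\le\Big|\{x:\mathcal{I}_\alpha(f_1^0,\dots,f_m^0)(x)\ge\tfrac12\lambda b\}\Big|
\le C\Big(\frac{\prod_{i=1}^m\int_{Q^*}f_i}{\lambda b}\Big)^{\frac{mn}{mn-\alpha}},
\end{equation*}
and the hypothesis $\mathcal{M}_\alpha(\vec f)\le\lambda d$ at a point of $E$ (or of $Q$) bounds $\prod_i\int_{Q^*}f_i\le C\lambda d\,|Q|^{m-\alpha/n}$, which after substitution collapses to $|E|\le K|Q|(d/b)^{n/(mn-\alpha)}$.

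The main obstacle I anticipate is making the weak-type endpoint bound for the multilinear fractional integral $\mathcal{I}_\alpha\colon L^1\times\cdots\times L^1\to L^{mn/(mn-\alpha),\infty}$ precise, since unlike the linear case one must track how the product structure interacts with the single exponent $mn/(mn-\alpha)$ and verify that the bookkeeping produces exactly the exponent $n/(mn-\alpha)$ in the final estimate. A secondary delicate point is correctly choosing the dilation factor defining $Q^*$ and the threshold $B$ so that the mixed terms are genuinely subsumed while the local term remains large on $E$; this requires the comparison constant $c_{m,n,\alpha}$ to be uniform in $Q$, $\lambda$, and $\vec f$, which the kernel's homogeneity guarantees but must be checked carefully when some but not all slots carry the far part $f_i^\infty$.
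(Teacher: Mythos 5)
Your architecture is exactly the one the paper has in mind: the paper omits the proof, stating only that Lemma 5.2 is the multilinear version of Lemma 1 of Muckenhoupt--Wheeden and that the key input is the endpoint unweighted estimate of Lemma 7 of Kenig--Stein. Your splitting $f_i=f_i^0+f_i^\infty$ relative to a fixed dilate $Q^*$, the expansion into $2^m$ terms, the uniform comparison of every term containing a far factor with $\mathcal{I}_\alpha(\vec f)(x_0)\le\lambda$ (valid since $\vec f\ge 0$ and the kernels at $x$ and $x_0$ are comparable once some $y_j\notin Q^*$), the choice of $B$ so that on $E$ the purely local term exceeds $\frac12\lambda b$, and the use of $\mathcal{M}_\alpha(\vec f)\le\lambda d$ at a point of $E$ to get $\prod_i\int_{Q^*}f_i\le C\lambda d\,|Q|^{m-\alpha/n}$ — all of this is a faithful reconstruction of that argument.

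There is, however, one genuine error, precisely at the point you flagged as your "main obstacle": the endpoint exponent. Kenig--Stein gives $\mathcal{I}_\alpha\colon L^1\times\cdots\times L^1\to L^{q,\infty}$ with $\frac1q=m-\frac{\alpha}{n}$, i.e.\ $q=\frac{n}{mn-\alpha}$, not $\frac{mn}{mn-\alpha}$ as you wrote (the two agree only when $m=1$, which is likely the source of the slip). Your displayed weak-type inequality with exponent $\frac{mn}{mn-\alpha}$ is false: taking $f_i=\chi_{B(0,r)}$ for each $i$, one has $\mathcal{I}_\alpha(\vec f)\gtrsim r^\alpha$ on $B(0,r/2)$, so the level set at height $\sim r^\alpha$ has measure $\gtrsim r^n$, while your right-hand side is $\sim\bigl(r^{mn}/r^\alpha\bigr)^{mn/(mn-\alpha)}=r^{mn}\ll r^n$ as $r\to 0$ when $m\ge 2$. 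Moreover, even granting your display, the final substitution does not "collapse" as claimed: it yields $|E|\lesssim |Q|^m(d/b)^{mn/(mn-\alpha)}$, with the wrong powers of both $|Q|$ and $d/b$. With the correct $q=\frac{n}{mn-\alpha}$ the homogeneity works out exactly, since $\bigl(m-\frac{\alpha}{n}\bigr)\cdot\frac{n}{mn-\alpha}=1$, giving $|E|\le C|Q|(d/b)^{n/(mn-\alpha)}$ as required. Once the exponent is corrected no other repair is needed, apart from noting the trivial case $E=\emptyset$ and that the point of $E$ used for the maximal-function bound lies in $Q\subset Q^*$.
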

\begin{proof}
Here we need to use the endpoint unweighted estimate for $\mathcal{I}_\alpha$ in Lemma \rm{7} \cite{Kenig}.
Lemma \ref{lemma 5.2} is the multilinear version of Lemma \rm{1} \cite{MW}. Since the main ideas are almost the same as \cite{MW}, we omit the proof here.
\end{proof}

\begin{remark}
Lemma \ref{lemma 5.2} does not hold for the dyadic maximal operator
$\mathcal{M}_\alpha^d$

 We give an example in the case $m=1$.

Let $0<\alpha<n$ and $B>0$. Let $Q_1=[0,1]^n$ and $Q_{-1}=[-1,0]^n$. Set
\begin{equation*}
f(x)=\frac{\chi_{Q_{-1}}(x)}{|x|^{\alpha}},\text{and} \ \lambda
    =I_\alpha f((1,1,\dots,1)).
\end{equation*}
Then $I_\alpha f(0)=\int_{Q_{-1}}\frac{dy}{|y|^n}=+\infty$, and
\begin{equation}\label{eq:I-E-1}
|\{x\in Q_1; I_\alpha f(x)>b\}|>0\text{ for any }b>0.
\end{equation}
If a dyadic cube $Q$ contains $x\in Q_1$, then it must be contained in
$[0,\infty)^n$. Since $\supp f\subset (-\infty,0]^n$, we get
\begin{equation*}
\mathcal{M}_\alpha^d({f})(x)
=\sup_{Q\ni x} \frac{1}{|Q|^{1-\frac{\alpha}{n}}}\int_Q |f(y)| \;dy
=0 \ \text{for} \ x \in Q_1.
\end{equation*}
So, for each $d>0$, $b\ge B$ we have by (\ref{eq:I-E-1})
\begin{equation}\label{eq:I-E-2}
|\{x\in Q_1; I_\alpha f(x)>b\lambda,
\,\mathcal{M}_\alpha^d({f})(x)\le d\lambda\}|
=|\{x\in Q_1; I_\alpha f(x)>b\lambda \}|>0.
\end{equation}
Therefore there exists no $K>0$ such that
\begin{equation*}
|\{x\in Q_1; I_\alpha f(x)>b\lambda,
\,\mathcal{M}_\alpha^d({f})(x)\le d\lambda\}|
\leq K |Q_1| (d/b)^{n/(n-\alpha)},
\end{equation*}
although $\lambda =I_\alpha f((1,1,\dots,1))$.
\end{remark}
\vspace{0.25cm}
\noindent\textbf{Proof of Proposition 5.1}\\

\begin{proof} The following argument is essentially taken from \cite{MW}.
Without loss of generality, we can assume that $\vec{f}$ is nonnegative and
has compact support.
For given $\lambda>0$, in the light of Whitney decomposition (Theorem $1$,
\cite[p.~167]{Stein}), there are cubes $\{Q_j\}$ with disjoint interiors such
that,
$$
\{x\in \Rn; \mathcal{I}_\alpha(\vec{f})>\lambda \}=\bigcup_{j=1}^\infty Q_j,
$$
and for each $j$, $\mathcal{I}_\alpha(\vec{f})\leq \lambda$ at some point of
$4Q_j$.
Let $B$ and $K$ be as in Lemma \ref{lemma 5.2} and let $b = \max(1, B)$.
As a property of $A_{\infty}$ weight,
it is known that for any $0<\varepsilon <1$ there exists $\delta>0$,
such that $|S|<\delta|Q|$ implies $\omega(S)<\varepsilon \omega(Q)$ for any
cube $Q$ and its measurable subset $S$. Let $\delta$ correspond to
$\varepsilon=\frac12 b^{-q}$ for $\omega(x)$. Choose $D$ so that
$\delta=K4^n(D/b)^{n/(mn-\alpha)}$.
Let $d$ satisfy $0<d \leq D$ and
$$
E_j=\{x\in Q_j; \mathcal{I}_\alpha(\vec{f})> \lambda b,
\mathcal{M}_\alpha(\vec{f})\leq \lambda d \}.
$$
According to Lemma \ref{lemma 5.2}, we have
$|E_j| \leq K |4Q_j| (d/b)^{n/(mn-\alpha)} < \delta |Q_j|$, and hence,
we have $\omega(E_j)\leq \frac12 b^{-q}\omega(Q_j)$. So,
\begin{align*}
&\omega(\{x\in \Rn; \mathcal{I}_\alpha(\vec{f})(x)> \lambda b,
\mathcal{M}_\alpha(\vec{f})(x)\leq \lambda d  \} )
\\
&=\sum_{j=1}^\infty \omega( \{x\in \Rn; \mathcal{I}_\alpha(\vec{f})(x)
> \lambda b, \mathcal{M}_\alpha(\vec{f})(x)\leq \lambda d  \} \cap Q_j)
\\
&=\sum_{j=1}^\infty \omega(E_j)
\leq \frac12 b^{-q}\omega(\{x\in \Rn; \mathcal{I}_\alpha(\vec{f})(x)
> \lambda \}).
\end{align*}
Therefore we obtain
\begin{equation}\aligned \label{equation 5.5}
\omega(\{x\in \Rn; \mathcal{I}_\alpha(\vec{f})(x)> \lambda b \})
&\leq \omega(\{x\in \Rn; \mathcal{M}_\alpha(\vec{f})(x)> \lambda d \})
\\
&+ \frac12 b^{-q}\omega(\{x\in \Rn; \mathcal{I}_\alpha(\vec{f})(x)
> \lambda \}).
\endaligned
\end{equation}

Because $\vec{f}$ has compact support, there exists a cube $Q$ such that
$\vec{f}=0$ for any $x$ outside $Q$.
For fixed $x$ outside $3Q$, let $x_0$ be the point in $Q$ closest to $x$, let
$P$ be the smallest cube with center at $x$ and
sides parallel to $Q$ that contains $Q$. Then there is a constant $L=L(n)>1$
such that $|P|\leq L |x-x_0|^n$. Moreover,
\begin{align*}
\mathcal{I}_\alpha(\vec{f})(x)
&\leq \frac{1}{(m|x-x_0|)^{mn-\alpha}}\prod_{i=1}^m \int_Q f_i(y_i)dy_i
\leq \left( \frac{L}{m^n}\right)^{m-\alpha/n}\mathcal{M}_\alpha(\vec{f})(x).
\end{align*}
Hence, taking $d = \min\{D,(L/m^n)^{\alpha/n-m}\}$, we get
\begin{equation}\label{equation 5.6}
\{x\in \Rn; \mathcal{I}_\alpha(\vec{f})(x)> \lambda \} \cap (3Q)^{c}
\subset \{x\in \Rn; \mathcal{M}_\alpha(\vec{f})(x)> \lambda d \}.
\end{equation}
From (\ref{equation 5.5}) and (\ref{equation 5.6}), it follows that
\begin{equation}\aligned \label{equation 5.7}
&\omega(\{x\in \Rn; \mathcal{I}_\alpha(\vec{f})(x)> \lambda b \})
\\
&\leq \omega(\{x\in \Rn; \mathcal{M}_\alpha(\vec{f})(x)>  \lambda d \})
+ \frac12 b^{-q}\omega(\{x\in \Rn;\mathcal{I}_\alpha(\vec{f})(x)> \lambda
\}\cap(3Q)^{c})
\\
&\quad + \frac12 b^{-q}\omega(\{x\in \Rn;\mathcal{I}_\alpha(\vec{f})(x)
> \lambda \}\cap (3Q))
\\
&\leq 2\omega(\{x\in \Rn; \mathcal{M}_\alpha(\vec{f})(x)>  \lambda d \})
+ \frac12 b^{-q}\omega(\{x\in 3Q;\mathcal{I}_\alpha(\vec{f})(x)> \lambda \}).
\endaligned
\end{equation}

{\rm(i)}.
Let $N$ be any positive number, multiply both sides of (\ref{equation 5.7}) by
$\lambda^{q-1}$ and integrate with respect to $\lambda$ from $0$ to $N$,
then make a change of variables, we obtain
\begin{align*}
&b^{-q}\int_0^{bN}\lambda^{q-1}\omega(\{\mathcal{I}_\alpha(\vec{f})(x)
>  \lambda \})d\lambda
\\
&\leq 2 \int_0^{N}\lambda^{q-1}\omega(\{\mathcal{M}_\alpha(\vec{f})(x)
>  \lambda d\})d\lambda + \frac{b^{-q}}{2} \int_0^{N}\lambda^{q-1}
\omega(\{\mathcal{I}_\alpha(\vec{f})(x)>  \lambda \}\cap 3Q)d\lambda
\\
&\leq 2d^{-q} \int_0^{dN}\lambda^{q-1}
\omega(\{ \mathcal{M}_\alpha(\vec{f})(x)>  \lambda \})d\lambda
+ \frac{b^{-q}}{2} \int_0^{bN}\lambda^{q-1}
\omega(\{ \mathcal{I}_\alpha(\vec{f})(x)>  \lambda \})d\lambda.
\end{align*}
Therefore
$$
b^{-q}\int_0^{bN}\lambda^{q-1}\omega(\{\mathcal{I}_\alpha(\vec{f})(x)
>  \lambda \})d\lambda
\leq 4d^{-q} \int_0^{dN}\lambda^{q-1}\omega(\{\mathcal{M}_\alpha(\vec{f})(x)
>  \lambda \})d\lambda.
$$
Observe that $\big\|f\big\|_{L^q(\omega)}=q\int_0^{\infty}\lambda^{q-1}
\omega(\{x\in \Rn; |f(x)|>\lambda\})d\lambda$, $0<q<\infty$.
Finally, letting $N$ approach $\infty$, we deduce that
$$
\int_\Rn |\mathcal{I}_\alpha(\vec{f})(x)|^q\omega(x)dx \leq
4\left(\frac{b}{d}\right)^q \int_\Rn [\mathcal{M}_\alpha(\vec{f})(x)]^q
\omega(x)dx.
$$
This shows (\ref{strong estimate}).

\rm{(ii)}. Next we shall check (\ref{weak estimate}).
The technique is similar to \rm{(i)}.
Let $N$ be any positive number, multiply both sides of (\ref{equation 5.7})
by $\lambda^q$, then take the supremum of both sides for $0<\lambda<N$
and note the fact that $\sup(u+v)\leq \sup u + \sup v$. Then making a change
of variables, we have
\begin{align*}
&b^{-q}\sup_{0< \lambda < bN}\lambda^q
\omega(\{\mathcal{I}_\alpha(\vec{f})(x)>  \lambda \})
\\
&\leq 2 d^{-q}\sup_{0< \lambda < dN}\lambda^q
\omega(\{ \mathcal{M}_\alpha(\vec{f})(x)>  \lambda \})
+ \frac{b^{-q}}{2} \sup_{0<\lambda<N} \lambda^q
\omega(\{\mathcal{I}_\alpha(\vec{f})(x)>  \lambda \}\cap 3Q)
\\
&\leq 2 d^{-q}\sup_{0< \lambda < dN}\lambda^q
\omega(\{ \mathcal{M}_\alpha(\vec{f})(x)>  \lambda \})
+ \frac{b^{-q}}{2}\sup_{0< \lambda < bN}\lambda^q
\omega(\{\mathcal{I}_\alpha(\vec{f})(x)>  \lambda \}).
\end{align*}
Thus
$$
b^{-q}\sup_{0< \lambda < bN}\lambda^q
\omega(\{\mathcal{I}_\alpha(\vec{f})(x)>  \lambda \})
\leq 4 d^{-q}\sup_{0< \lambda < dN}\lambda^q
\omega(\{\mathcal{M}_\alpha(\vec{f})(x)>  \lambda \}).
$$
Now letting $N$ tend to $\infty$, we obtain
$$
\sup_{\lambda>0} \lambda^q
\omega(\{x\in \Rn;|\mathcal{I}_\alpha(\vec{f})(x)|>\lambda\})
\leq 4\left(\frac{b}{d}\right)^q \sup_{\lambda>0} \lambda^q
\omega(\{x\in \Rn;\mathcal{M}_\alpha(\vec{f})(x)>\lambda\}).
$$
This shows (\ref{weak estimate}).
So far, we have finished the proof of Proposition \ref{pro 5.1}.
\end{proof}


\end{document}